\DeclareMathOperator{\diam}{diam}
\DeclareMathOperator{\lip}{Lip}
\newtheorem{proposition}{Proposition}[section]
\newtheorem{corollary}[proposition]{Corollary}
\newtheorem{theorem}[proposition]{Theorem}
\newtheorem{lemma}[proposition]{Lemma}
\theoremstyle{definition}
\newtheorem{definition}[proposition]{Definition}
\theoremstyle{remark}
\newtheorem{remark}[proposition]{Remark}
\newtheorem{example}[proposition]{Example}
\title{On generic convergence of successive approximations of mappings with convex and compact point images}
\author{Christian Bargetz \and Emir Medjic \and Katriin Pirk}
\begin{document}
\maketitle
\begin{abstract}
  \noindent\textbf{\textsf{Abstract.}} We study the generic behavior of the method of successive approximations for set-valued mappings in separable Banach spaces. We consider the case of nonexpansive mappings with convex and compact point images and show that for the typical such mapping and typical points of its domain the sequence of successive approximations is unique and converges to a fixed point of the mapping.
  \vskip2mm
  \noindent\textbf{\textsf{Keywords.}} Banach space, generic property, set-valued nonexpansive mapping, successive approximations.
  \vskip2mm
  \noindent\textbf{\textsf{Mathematics Subject Classifications.}} 47H04, 47H09, 47H10, 54E52
\end{abstract}

\section{Introduction}

Brouwer's fixed point theorem implies that every continuous self-mapping of a bounded, closed and convex subset of a Euclidean space has a fixed point. Unfortunately, this is no longer true in infinite dimensions. There are even nonexpansive self-mappings of bounded closed and convex subsets of a Banach space which do not have a fixed point. This motivates the question of whether this is at least true for typical nonexpansive mappings. In~\cite{BM1976} F.~S.~de~Blasi and J.~Myjak gave a positive answer to this question. More precisely, given a Banach space $X$ and a bounded, closed and convex subset $C\subset X$, they show that the set of nonexpansive self-mappings of $C$ having a unique fixed point is the complement of a meagre set in the space of all nonexpansive self-mappings equipped with the metric of uniform convergence. In addition, they also showed that this fixed point of the nonexpansive mapping $f$ can be reached by starting with an arbitrary point $x_0$ and then iteratively setting $x_{k+1}=f(x_k)$ for $k=1,2,\ldots$.

This motivates the question of whether the typical nonexpansive mapping actually has Lipschitz constant smaller than one. In the case of Hilbert spaces, F.~S.~de~Blasi and J.~Myjak already answered this question negatively in~\cite{BM1976} by showing that to the contrary the typical nonexpansive mapping has the maximal possible Lipschitz constant one. This result was later generalised to Banach spaces in~\cite{BR2016} and more general settings in~\cite{BDR2017}.

In~\cite{RZ2001}, S.~Reich and A.~Zaslavski gave an explanation of de Blasi and Myjak's result by showing that the typical nonexpansive mapping satisfies the assumptions of Rakotch's fixed point theorem~\cite{Rakotch}.

The results on generic existence of fixed points of nonexpansive mappings have been generalised to set-valued mappings in~\cite{BMRZ2009} and~\cite{PL2015}. Recall that a fixed point of a set-valued mapping
\[
  F\colon C\to 2^{C}\setminus\{\emptyset\}
\]
is an element $x\in C$ satisfying $x\in F(x)$. The proofs of these results use iterations of a mapping defined on suitable hyperspaces, i.e. spaces of sets equipped with the Hausdorff distance. More precisely, let $\mathcal{B}(C)$ be a hyperspace of certain nonempty and closed subsets of $C$, and let $F\colon C\to\mathcal{B}(C)$ be a nonexpansive mapping. Then the mapping
\[
  \tilde{F}\colon \mathcal{B}(C)\to\mathcal{B}(C), \qquad A \mapsto \bigcup_{x\in A} F(x),
\]
is considered. In some sense this means that the problem is transferred to a question on a self-mapping defined on a metric space of sets.

In~\cite{Pianigiani} the problem is approached from a different angle. In the case of rather simple set-valued mappings which are defined by pairs of nonexpansive mappings the following iterative procedure is considered:
\[
  x_0\in C, \qquad x_{k+1} \in \operatorname{argmin} \{\|y-x_k\|\colon y\in F(x_k)\}, \quad k\in\mathbb{N}.
\]
Under the assumption that $C$ is a bounded, closed and convex subset of a Hilbert space, G.~Pianigiani showed that, given a point $x_0$ the set of nonexpansive mappings for which the above sequence is uniquely defined and converges to a fixed point is the complement of a meagre set. In~\cite{BR2019}, using different methods, these results have been generalised to the setting of Banach spaces. 

More generally, in~\cite{Medjic2022} compact-valued nonexpansive mappings on bounded, closed and convex subsets of Banach spaces are considered. The main results of this article establish that the typical compact-valued nonexpansive mapping has a unique sequence of successive approximations for typical initial points in its domain. Moreover this sequence converges to a fixed point of this mapping.

The current article is considered with the same question but for mappings whose point-images are compact and convex. A crucial step in the proof in~\cite{Medjic2022} is to perturb a given nonexpansive mapping $F$ by adding an additional point to $F(x)$ for some $x$. This point $z$ is required to have positive distance to $F(x)$ rendering $F(x)\cup\{z\}$ non-convex. For this reason a  different approach is needed for the case of compact and convex sets. 

In order to see that this is not a particular case of the situation considered in~\cite{Medjic2022} we show in Proposition~\ref{prop:ConvPorous} that the set of convex compact sets is a porous and hence small subset of the hyperspace of compact sets.

One of the main methods of the current construction is based on V. Klee's proof that the typical convex body is smooth and rotund, see~\cite{Klee1959SmoothnessConvexity}. A strengthening of this result for convex bodies in Euclidean spaces was later obtained by T. Zamfirescu in~\cite{Zamfirescu}. A survey on typical properties of convex bodies can be found in~\cite{Gruber}.

\section{Preliminaries and notation}

In this section we clarify most of the main notions and notation used throughout this paper. Some definitions, essential to other sections, are introduced where they are needed. We consider mostly Banach spaces and hyperspaces constructed of their subsets. Given a Banach space $X$, we denote the open ball with centre $a\in X$ and radius $r>0$ by $B_X(a,r),$ and the corresponding closed ball by $\overline{B}_X(a,r).$ If there is no confusion, we omit the subscript and write e.g. $B(a,r)$.
Given $x\in X$ and $A\subset X$ we write $d(x,A)=\inf_{y\in A}\{\|x-y\|\}$ and $\diam A=\sup_{x,y\in A}\{\|x-y\|\}$ for the distance of the point $x$ from the set $A$ and the diameter of the set $A$, respectively.

Next up we construct a suitable hyperspace for our purposes. Let $C\subset X$ be a nonempty closed and bounded set. Consider the set $\mathcal{CK}(C)$ of all nonempty convex compact subsets of $C$.  It is well-known that $(\mathcal{CK}(C),h)$ is a complete metric space, where $h$ stands for the Hausdorff distance defined in the following way:
\[
h(A,B):=\max\{\sup_{a\in A}\inf_{b\in B}\|x-y\|,\sup_{b\in B}\inf_{a\in A}\|x-y\|\}, \quad \text{for all}\quad A,B\in \mathcal{CK}(C).
\]
Note that there is an analogue to the triangle inequality that combines the distance to a point and the Hausdorff distance, i.e.:
\[
d(x,A)\leq d(x,B)+h(B,A)\quad \text{for all}\quad A,B\in \mathcal{CK}(C),x \in X.
\]

Note that in an analogous way one can define the hyperspace $\mathcal{K}(C)$ of nonempty compact subsets of $C$ with respect to Hausdorff metric. In this paper we are concerned with typical properties, i.e. properties for which the set of elements enjoying it is the complement of a small set. The size of a set can be regarded in many ways, e.g. in the sense of Baire category, \emph{meagre} sets, i.e. countable unions of nowhere dense sets, are considered to be small. We also use the notion of porous sets that are in the same scale even smaller.
More precisely, for an arbitrary metric space $M$ we say that a subset $A\subset M$ is \emph{porous at point} $x\in A$ if there are constants $\alpha>0$ and $\varepsilon_0>0$ that satisfy the following condition: for all $\varepsilon\in(0,\varepsilon_0)$ there exists $y\in B(x,\varepsilon)$ such that $B(y,\alpha \varepsilon)\cap A=\emptyset$. We say the set $A$ is \emph{porous} provided it is porous at all of its points. $A$ is called \emph{$\sigma$-porous} if it is a countable union of porous sets.

Recall that in \cite{Medjic2022} similar results to the main results of the current paper were obtained. There the author handles the case of compact-valued nonexpansive mappings, whereas in this paper we regard nonexpansive mappings with values that are compact and convex. Despite the seeming overlapping, the small size of the set of convex compact subsets implies that the formally narrower approach is meaningful on its own.

\begin{proposition}\label{prop:ConvPorous}
  Let $X$ be a Banach space and $C\subset X$ a bounded closed and convex subset. The set of convex compact subsets of~$C$ is a porous subset of the hyperspace $\mathcal{K}(C)$ of compact subsets of~$C$.
\end{proposition}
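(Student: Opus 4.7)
The plan is to show porosity at each $A \in \mathcal{CK}(C)$ by producing, for every sufficiently small $\varepsilon>0$, a compact set $B \in \mathcal{K}(C)$ with $h(A,B) < \varepsilon$ that is \emph{quantitatively non-convex}: it will contain two points $p_1,p_2\in B$ whose midpoint $q:=(p_1+p_2)/2$ satisfies $d(q,B)\ge 2\varepsilon/5$. Porosity with constant $\alpha=1/5$ will then follow from a short midpoint argument: for any convex compact $K$ with $h(K,B)=\eta$, pick $k_i\in K$ with $\|k_i-p_i\|\le\eta$; by convexity $(k_1+k_2)/2\in K$, and this point lies within $\eta$ of $q$ (triangle inequality) and within $\eta$ of $B$ (Hausdorff bound). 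Hence $d(q,B)\le 2\eta$, so $\eta\ge\varepsilon/5$, i.e.\ $B(B,\varepsilon/5)\cap\mathcal{CK}(C)=\emptyset$.

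The construction of $B$ splits according to the size of $A$. If $A=\{a\}$ is a singleton, then assuming $\diam(C)>0$ (the degenerate case is vacuous) I pick $c\in C\setminus\{a\}$, place $z$ on the segment from $a$ to $c$ with $\|z-a\|=4\varepsilon/5$ (valid provided $\varepsilon_0\le\tfrac{5}{4}\|a-c\|$), and set $B:=\{a,z\}\in\mathcal{K}(C)$. The midpoint $(a+z)/2$ is at distance exactly $2\varepsilon/5$ from each of $a$ and $z$. If $|A|\ge 2$, I pick distinct $a_1,a_2\in A$, so that $[a_1,a_2]\subset A$ by convexity; setting $m=(a_1+a_2)/2$, $v=(a_2-a_1)/\|a_2-a_1\|$, $r=2\varepsilon/5$ and $B:=A\setminus B(m,r)$, one has $B\in\mathcal{K}(C)$, and, provided $\varepsilon_0\le\|a_1-a_2\|$, the witnesses $p_\pm:=m\pm rv$ lie in $B$ while $d(m,B)\ge r$ because every point of $B$ lies outside the open ball $B(m,r)$.

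The estimate $h(A,B)<\varepsilon$ is immediate in the singleton case ($h(A,B)=4\varepsilon/5$). In the second case, any $a\in A$ with $\|a-m\|<r$ admits $b:=p_+\in B$ as a witness with $\|a-b\|\le \|a-m\|+r<2r=4\varepsilon/5$; combined with $B\subset A$, this gives $h(A,B)\le 4\varepsilon/5<\varepsilon$.

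The main subtlety I anticipate is that the ``remove a ball'' construction must be carried out with the excision direction $v$ along a chord $[a_1,a_2]\subset A$, not along an arbitrary direction, because $A$ may well have empty interior in $X$ (for instance, $A$ could itself be a segment or a low-dimensional slice of $C$). Choosing $v$ as a chord is what guarantees by convexity alone that $m\pm rv\in A$, so that these points survive in $B$; an arbitrary direction would leave $A$ immediately. The singleton case, where there is nothing to remove, is precisely what forces the separate ``add a point'' construction, and this is the reason for the dichotomy rather than a single uniform perturbation.
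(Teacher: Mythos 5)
Your proof is correct, but it takes a genuinely different route from the paper's. The paper splits on whether $K=C$: if $K\neq C$ it adjoins a single external point $z$ with $d(z,K)=\tfrac34\varepsilon$ to form $K'=K\cup\{z\}$, and if $K=C$ it excises a punctured ball while keeping its centre as an isolated point; in both cases the (unspoken) reason no compact set $L$ with $h(K',L)<\tfrac14\varepsilon$ can be convex is disconnectedness --- such an $L$ is covered by two disjoint, nonempty, relatively open pieces. You split instead on whether $A$ is a singleton, and your non-convexity certificate is an explicit quantitative midpoint violation: two surviving points $p_\pm$ of the perturbed set whose midpoint is at distance at least $2\varepsilon/5$ from it, so that any convex set within Hausdorff distance $\eta$ forces $d(q,B)\leq 2\eta$ and hence $\eta\geq\varepsilon/5$. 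This buys you a fully explicit verification of the ``no nearby set is convex'' step (which the paper leaves as an ``observe''), and it removes the need to locate a point of $C$ at a prescribed positive distance from $A$ --- the paper implicitly invokes connectedness of $C$ and the intermediate value theorem to find $z$ with $d(z,K)=\tfrac34\varepsilon$ --- as well as the need to treat $A=C$ separately, since your chord excision works whenever $A$ has two points. Your observation that the excision direction must be taken along a chord of $A$ (because $A$ may have empty interior) is exactly the right precaution. The only costs are a slightly worse porosity constant ($\alpha=1/5$ versus $1/4$) and the extra singleton case. One small caveat: when $C$ itself is a singleton the claim is not ``vacuous'' as you say but actually degenerate (a one-point hyperspace is not porous in itself); however, the paper's proof has the same blind spot there ($\varepsilon_0=\diam C/3=0$), so this is an issue with the statement rather than with your argument.
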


\begin{proof}
  Fix an arbitrary convex and compact $K\in\mathcal{K}(C)$~. If $K\neq C$ we set
  \[
    \alpha=\frac14 \qquad\text{and}\qquad \varepsilon_0 := \sup\{d(x,K)\colon x\in C\}.
  \]
Observe that $\varepsilon_0$ is positive. Given $\varepsilon\in(0,\varepsilon_0)$ we now pick $z\in C$ with $d(z,K)=\frac34\varepsilon$ and set $K' = K\cup\{z\}$ which is a compact set with $h(K,K')<\varepsilon$. Now observe that no compact set $L$ with $h(K',L)<\alpha\varepsilon$ can be convex. In the case of $K=C$, which of course can only happen if $C$ is compact itself, we only change the value of $\varepsilon_0$ to $\varepsilon_0=\diam C/3$~. Then for $\varepsilon\in(0,\varepsilon_0)$ we set $K'=(K\setminus B(z,\varepsilon))\cup\{z\}$ for some arbitrary $z\in C$ and confirm that again no compact set $L$ with $h(K',L)<\alpha\varepsilon$ can be convex. Since $K$ was chosen arbitrarily, the result follows.
\end{proof}

As mentioned, the core results of this paper are for certain set-valued nonexpansive mappings. We call a mapping $F\colon C\to \mathcal{CK}(C)$ \emph{nonexpansive} if for all $x,y\in C$
\[
h(F(x),F(y))\leq \|x-y\|. 
\]
We denote the set of all such nonexpansive mappings by $\mathcal{M}$. It is well-known that when equipped with the metric of uniform convergence, i.e. for all $F,G\in \mathcal{M}$
\[
d_\infty(F,G):= \sup_{x\in C} h(F(x),G(x)),
\]
$(\mathcal{M},d_\infty)$ is a complete metric space. Similarly to the vector-valued case, we may regard Lipschitz mappings among set-valued mappings. Note that if $F\in \mathcal{M}$, i.e. $F$ is a nonexpansive mapping, then its Lipschitz constant satisfies $\lip F\leq 1$, in the case of $\lip F<1$ we say that $F\in \mathcal{M}$ is a \emph{strict contraction}. In Section 3 (see Proposition \ref{prop: NdenseinM}) we see that all such strict contractions form a dense subset of $\mathcal{M}$.

The following fact regarding Lipschitz mappings on geodesic spaces is well known, nonetheless we give it with proof for the convenience of the reader.
\begin{lemma}\label{lem: Lipboundinandoutofball}
Let $(Y,\rho)$ be a geodesic space and $C\subset Y$ a convex subset. For a continuous function $f\colon Y\to Y$ to be Lipschitz it is necessary and sufficient that the restrictions $f|_C$ and $f|_{Y\backslash C}$ are both Lipschitz. Moreover, the Lipschitz constant of $f$ has the following bound
\[
\lip f\leq \max\{\lip (f|_C),\lip (f|_{Y\backslash C})\}.
\]
\end{lemma}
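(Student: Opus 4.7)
The plan is to dispense with necessity in one line (restrictions of an $L$-Lipschitz map are $L$-Lipschitz) and focus on sufficiency. Set $L := \max\{\lip(f|_C),\lip(f|_{Y\setminus C})\}$. The first step is to upgrade both Lipschitz estimates to the closures: since $f$ is continuous on $Y$ and $L$-Lipschitz on $C$, passing sequences through the bound shows $f$ is $L$-Lipschitz on $\overline{C}$, and likewise on $\overline{Y\setminus C}$. Since $\overline{C}\cup\overline{Y\setminus C}=Y$, any pair $x,y\in Y$ either lies jointly in $\overline{C}$, jointly in $\overline{Y\setminus C}$, or is split with one point in the interior of $C$ and the other in the interior of its complement.

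The first two cases are immediate from the upgraded estimates. For the mixed case, I would invoke the geodesic structure of $Y$: take a geodesic $\gamma\colon[0,d]\to Y$ from $x$ to $y$, with $d=\rho(x,y)$, parameterized by arc length, and set $t^* := \sup\{t\in[0,d] : \gamma(t)\in\overline{C}\}$. Closedness of $\overline{C}$ and continuity of $\gamma$ give $\gamma(t^*)\in\overline{C}$; since $y\notin\overline{C}$, one has $t^*<d$, and the points $\gamma(t)$ with $t\in(t^*,d]$ lie in $Y\setminus\overline{C}\subseteq\overline{Y\setminus C}$, forcing their limit $\gamma(t^*)$ into $\overline{Y\setminus C}$ as well. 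Splitting at $\gamma(t^*)$ and invoking the two Lipschitz estimates on the pairs $(x,\gamma(t^*))$ and $(\gamma(t^*),y)$ yields
\[
\rho(f(x),f(y)) \le L\,t^* + L\,(d-t^*) = L\,\rho(x,y),
\]
which gives both sufficiency and the claimed bound on $\lip f$.

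The only nonroutine step is this intermediate-point argument, which is essentially a connectedness statement dressed up geodesically: $[0,d]$ is covered by the two closed sets $\gamma^{-1}(\overline{C})$ and $\gamma^{-1}(\overline{Y\setminus C})$, so they must intersect. I would expect that convexity of $C$ is not actually needed for this argument—only the geodesic structure of $Y$ plays a role—though it is natural in the broader context of the paper.
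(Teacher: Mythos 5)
Your proof is correct and follows essentially the same route as the paper's: both reduce to the mixed case and split the estimate at a point of the geodesic segment from $x$ to $y$ lying in the closures of both $C$ and $Y\setminus C$. You are in fact somewhat more careful than the paper --- you make explicit the extension of the Lipschitz bounds to the closures via continuity, and you produce the splitting point by a sup/connectedness argument rather than by convexity --- and your closing observation that convexity of $C$ is not actually needed for this argument is correct.
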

\begin{proof}
Necessity is obvious. To show the sufficiency and the claimed bound to the \mbox{Lipschitz} constant it is enough to show the result for fixed $x\in C$ and $y\in Y\backslash C$, since the other cases are trivial. Now, we may fix a metric segment $[x,y]$ and due to the fact that $C$ is convex, there exists a unique point $z$ in the intersection of $[x,y]$ with the closures of $C$ and $Y\backslash C$, therefore $\rho(x,y)=\rho(x,z)+\rho(z,y)$ and furthermore
\begin{align*}
\rho(f(x),f(y)) &\leq \rho(f(x),f(z))+\rho(f(z),f(y))\leq\lip (f|_C)\rho(x,z)+\lip(f|_{Y\backslash C})\rho(z,y)\\&
\leq \max\{\lip (f|_C),\lip (f|_{Y\backslash C}\rho(x,y),
\end{align*}
which completes the proof.

\end{proof}
\section{Hyperspaces of convex sets}
The main aim of this section is to establish the following theorem on the hyperspace of bounded closed and convex subset of a given closed and convex subset of a Banach space.
\begin{theorem}\label{thm: CBCishyperbolic}
  Let $X$ be a Banach space and let $D\subset X$ be a closed convex set. The hyperspace $\mathcal{CB}(D)$ of bounded closed and convex subsets of~$D$ equipped with the Hausdorff distance is a complete hyperbolic metric space.
\end{theorem}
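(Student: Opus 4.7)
The plan is to equip $\mathcal{CB}(D)$ with the convex combination
\[
W(A,B,t) := \overline{(1-t)A + tB}, \qquad A,B\in\mathcal{CB}(D),\ t\in[0,1],
\]
and verify that this turns $(\mathcal{CB}(D),h)$ into a hyperbolic metric space in the sense of Reich--Shafrir/Kirk. Convexity of $D$ guarantees $(1-t)A+tB\subset D$, and Minkowski combinations of bounded convex sets are bounded and convex, so $W(A,B,t)\in\mathcal{CB}(D)$. The symmetry $W(A,B,t)=W(B,A,1-t)$ and the endpoint relations $W(A,B,0)=A$, $W(A,B,1)=B$ are immediate.

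The analytic heart of the proof consists of two inequalities, a segment estimate
\[
h(W(A,B,s), W(A,B,t)) \leq |s-t|\, h(A,B)
\]
and a joint convexity estimate
\[
h(W(A,C,t), W(B,E,t)) \leq (1-t)\, h(A,B) + t\, h(C,E).
\]
For the first, assume $0\leq s<t\leq 1$; given a typical element $u=(1-s)a+sb$ of $(1-s)A+sB$, choose $b'\in B$ with $\|a-b'\|\leq d(a,B)+\varepsilon\leq h(A,B)+\varepsilon$ and use convexity of $B$ to form $\tilde b := \tfrac{t-s}{t}\,b' + \tfrac{s}{t}\,b \in B$. Then $v:=(1-t)a+t\tilde b\in(1-t)A+tB$ satisfies $v-u=(t-s)(b'-a)$, hence $\|u-v\|\leq (t-s)(h(A,B)+\varepsilon)$. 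A symmetric argument using convexity of $A$ gives the analogous bound starting from an element of $(1-t)A+tB$. Applying the triangle inequality along $0\to s\to t\to 1$ forces this $\leq$ to be an equality, so $W(A,B,\cdot)$ is in fact a geodesic parametrization. For the joint estimate, given $u=(1-t)a+tc\in(1-t)A+tC$, pick $b\in B,\, e\in E$ within $h(A,B)+\varepsilon$ of $a$ and $h(C,E)+\varepsilon$ of $c$ respectively; the point $v=(1-t)b+te$ lies in $(1-t)B+tE$ and satisfies $\|u-v\|\leq(1-t)\|a-b\|+t\|c-e\|$, giving the desired bound after letting $\varepsilon\to 0$.

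For completeness I would invoke the classical result that the hyperspace of nonempty closed bounded subsets of the complete metric space $D$ is complete in the Hausdorff metric; convexity passes to Hausdorff limits because a convex combination $\lambda x+(1-\lambda)y$ of two limit points is, by Hausdorff convergence and convexity of the approximants, approximable from the approximating sets, and therefore lies in the closed limit set.

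The main obstacle, more cosmetic than deep, is the bookkeeping around the closure operation: in an infinite-dimensional $X$ the Minkowski sum $(1-t)A+tB$ of two closed bounded convex sets may fail to be closed, which is why $W(A,B,t)$ must be defined as a closure. Consequently one has to note at each step that $h(E,F)=h(\overline E,\overline F)$, so that all of the above computations extend from $(1-t)A+tB$ and its cousins to their closures without loss. Beyond this, the proof is a direct exploitation of the convex structure of $A$ and $B$ through explicit choices of convex combinations.
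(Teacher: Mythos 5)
Your proposal is correct and takes essentially the same route as the paper: the closed Minkowski convex combinations $\overline{(1-t)A+tB}$ serve as the metric segments, and the two key facts are the segment estimate and the joint-convexity estimate, which are exactly the content of Lemmas~\ref{lem:Enpoints}--\ref{lem:BCHyperbolic} (you prove the segment estimate directly by an explicit $\varepsilon$-choice of auxiliary convex combinations, while the paper derives it from the endpoint case via a reparametrization identity, and your four-set convexity estimate directly yields condition $(ii)$ of Definition~\ref{def:hyperbolicspace}, for which the paper uses Lemma~\ref{lem:BCHyperbolic} plus the triangle inequality). The one item you leave unaddressed is condition $(iii)$ of the paper's Definition~\ref{def:hyperbolicspace} --- closedness of the family of segments under taking subsegments --- which is not part of every axiomatization of hyperbolicity but is required here; it follows from the identity $\lambda A + (1-\lambda)\bigl(\mu A + (1-\mu)B\bigr) = (\lambda+(1-\lambda)\mu)A + (1-\lambda)(1-\mu)B$ of Lemma~\ref{lem:CBGeodesic}, which is implicit in your segment-estimate computation.
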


The following immediate consequence of this theorem states the same for the hyperspace $\mathcal{CK}(D)$. 
\begin{corollary}
Let $X$ be a Banach space and let $D\subset X$ be a closed convex set. The hyperspace $\mathcal{CK}(D)$ of convex compact subsets of~$D$ equipped with the Hausdorff distance is a complete hyperbolic metric space.
\end{corollary}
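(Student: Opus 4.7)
The plan is to exhibit $\mathcal{CK}(D)$ as a subset of $\mathcal{CB}(D)$ that inherits both the hyperbolic structure and completeness from the ambient hyperspace. Since every compact subset of $D$ is bounded and closed, we have $\mathcal{CK}(D)\subseteq\mathcal{CB}(D)$, and the two Hausdorff metrics coincide on $\mathcal{CK}(D)$. By Theorem~\ref{thm: CBCishyperbolic}, $(\mathcal{CB}(D),h)$ is already a complete hyperbolic metric space, so the task reduces to two checks: that the geodesics witnessing hyperbolicity stay in $\mathcal{CK}(D)$ whenever their endpoints do, and that $\mathcal{CK}(D)$ is closed in $\mathcal{CB}(D)$.

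For the first check, I would observe that the geodesics used in (the proof of) Theorem~\ref{thm: CBCishyperbolic} are the Minkowski convex combinations $t\mapsto (1-t)A+tB$. If $A,B\in\mathcal{CK}(D)$, then $(1-t)A+tB$ is the image of the compact set $A\times B$ under the continuous map $(a,b)\mapsto(1-t)a+tb$, hence compact; it is convex as a Minkowski combination of convex sets; and since $D$ is convex it is contained in $D$. Consequently the convexity condition defining hyperbolicity, once established for $\mathcal{CB}(D)$, transfers verbatim to $\mathcal{CK}(D)$.

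For the second check, I would invoke the classical fact that in a complete ambient metric space the Hausdorff limit of a Cauchy sequence of nonempty compact sets is itself nonempty and compact; this is precisely what underlies the statement, recorded in Section~2, that $(\mathcal{CK}(C),h)$ is complete. Therefore any $h$-Cauchy sequence in $\mathcal{CK}(D)$ converges in $\mathcal{CB}(D)$ to a limit which is again compact (and automatically convex, since $\mathcal{CB}(D)$ is itself complete), establishing that $\mathcal{CK}(D)$ is a closed subspace of $\mathcal{CB}(D)$ and hence complete.

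There is no substantial obstacle; the only point that has to be confirmed rather than simply cited is that the specific family of geodesics constructed in Theorem~\ref{thm: CBCishyperbolic} is the pointwise Minkowski combination, which is in any case the canonical choice on hyperspaces of convex sets. Once this is noted, the corollary is immediate.
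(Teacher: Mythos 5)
Your proposal is correct and matches the paper's (implicit) argument: the paper simply declares the corollary an immediate consequence of Theorem~\ref{thm: CBCishyperbolic}, and the details you supply — that Minkowski convex combinations of compact convex sets are again compact convex (so no closure is needed and the geodesics stay in $\mathcal{CK}(D)$), and that $\mathcal{CK}(D)$ is complete by the standard fact already recorded in Section~2 — are exactly the checks that make this restriction work.
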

We start by introducing a few notions regarding geodesic metric spaces. We say that a metric space $(Y,\rho)$ is \emph{geodesic} if for every $x,y\in Y$ there exists an isometric embedding $c\colon [0,\rho(x,y)]\to Y$ such that $c(0)=x$ and $c(\rho(x,y))=y$. The image of such embedding is called a \emph{metric segment} in $Y$ with endpoints $x$ and $y$. Metric segments between two points need not be unique. If there is a unique metric segment with endpoints $x$ and $y$, then we denote it by $[x,y].$

For example in a closed convex subset $C_{\ell_1}\subset \ell_1$ that has at least three non-colinear points, it is easy to imagine different metric segments between a pair of points. However, thanks to the underlying vector space structure, we may select a family of metric segments by choosing for each pair $x,y\in C_{\ell_1}$ as the unique segment the metric segment of the form
\[
[x,y]:= \{\lambda x+(1-\lambda)y\colon \lambda\in [0,1]\}.
\]
In the following we assume that we have a geodesic metric space together with a family~$\mathcal{S}$ of metric segments containing a unique one for each pair of points.
Given a unique metric segment $[x,y]$ in $(Y,\rho)$ and $\lambda\in [0,1]$ we denote by $(1-\lambda)x\oplus \lambda y$ the unique point $z\in [x,y]$ which satisfies both $\rho(z,x)=\lambda \rho(x,y)$ and $\rho(z,y)=(1-\lambda)\rho(x,y)$.
\begin{definition}\label{def:hyperbolicspace}
Let $(Y,\rho)$ be a metric space and $\mathcal{S}$ a family of metric segments in $Y$. We call the triple $(Y,\rho,\mathcal{S})$ \emph{hyperbolic} if the following conditions are satisfied:
\begin{itemize}
\item[$(i)$] For each pair $x,y\in Y$, there exists a unique metric segment $[x,y]\in \mathcal{S}$ joining $x$ and $y$.
\item[$(ii)$] For all $x,y,z,w\in Y$ and all $t\in [0,1]$,
\[\rho((1-t)x\oplus t y,(1-t)w\oplus tz)\leq (1-t)\rho(x,w)+t\rho(y,z).\]
\item[$(iii)$] The collection $\mathcal{S}$ is closed with respect to subsegments. More precisely, for all $x,y\in X$ and $u,v\in [x,y]$ we have $[u,v]\subset [x,y].$
\end{itemize}
\end{definition}

In the literature there is a number of different definitions of hyperbolic spaces. In this paper we use the same approach as the authors of \cite{BDR2017} (for more information on different notions of hyperbolicity see e.g. Remark 2.13 in \cite{Kohlenbach}).

We can almost start gathering the tools to prove the main result of this section, but firstly, we would like to point out that Example 4.7 in \cite{Strobin2014} and also in Remark 5.2 in \cite{BDR2017} show that in the statement of Theorem \ref{thm: CBCishyperbolic} the assumption that the sets are convex is necessary. The first example mentioned above shows that the hyperspace of bounded and closed subsets even in a Banach space cannot be hyperbolic. For the convenience of the reader we repeat the second example here again.

\begin{example}
  Consider the metric space $D=[-1,1]^2$ with $\|\cdot\|$ the Euclidean norm. \linebreak Set $A=\{(-1,-1), (-1,1)\}$ and $B=\{(1,-1),(1,1)\}$. Then $h(A,B) = 2$ and 
  \[
    \frac{1}{2}A + \frac{1}{2}B = \{(0,-1), (0,0), (0,1)\}.
  \]
  Therefore 
  \[
    h\left(\frac{1}{2}A + \frac{1}{2}B,A\right) = \sqrt{2} \not = \frac{1}{2}h(A,B)
  \]
  which shows that taking convex combinations of sets does not result in geodesics in the hyperspace of compact subsets of~$D$.
\end{example}
We consider a Banach space $X$ and bounded, closed and convex sets $A,B \subset X$. Note that it is enough to consider this case since the following constructions preserve being subset of a given closed and convex subset. Given $\lambda \in [0,1]$ we set
\[
  \lambda A + (1-\lambda) B := \overline{\{\lambda x + (1-\lambda)y \colon x\in A, y\in B\}}
\]
and observe that this set is obviously bounded, convex and closed. Also note that if $X$ is reflexive or one of the sets is compact, we do not need to take the closure in the definition of the above set.
Next we present three lemmas that basically build the proof for Theorem \ref{thm: CBCishyperbolic}, the latter proof can be found right after these lemmas. Although some of the following lemmas are well-known we include their proofs for the convenience of the reader.

In the proofs of the following lemmas we use that the Hausdorff distance between two sets is the same as the Hausdorff distance between their closures.

\begin{lemma}\label{lem:Enpoints}
  The set $\lambda A + (1-\lambda) B$ satisfies 
  \begin{equation*}
    h(A, \lambda A + (1-\lambda) B) = (1-\lambda) h(A,B) \quad \text{and} \quad
    h(B, \lambda A + (1-\lambda) B) = \lambda h(A,B)
  \end{equation*}
  for every $\lambda\in[0,1]$.
\end{lemma}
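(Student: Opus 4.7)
The plan is to prove the two one-sided estimates
\[
h(A, C) \leq (1-\lambda)\, h(A, B) \qquad \text{and} \qquad h(B, C) \leq \lambda\, h(A, B),
\]
where $C := \lambda A + (1-\lambda) B$, and then to upgrade them to equalities by invoking the Hausdorff triangle inequality. Indeed, once both bounds are in hand,
\[
h(A, B) \leq h(A, C) + h(C, B) \leq (1-\lambda)\, h(A,B) + \lambda\, h(A,B) = h(A,B),
\]
which forces both of the displayed inequalities to be equalities. This bypasses any dual, lower-bound argument via support functionals or Hahn--Banach separation.

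For the estimate $h(A, C) \leq (1-\lambda)\, h(A, B)$ I will verify the two $\sup\inf$ expressions separately. Given $a \in A$ and $\varepsilon > 0$, I will pick $b \in B$ with $\|a - b\| \leq d(a, B) + \varepsilon \leq h(A,B) + \varepsilon$; then $\lambda a + (1-\lambda) b \in C$ witnesses
\[
d(a, C) \leq \|a - (\lambda a + (1-\lambda) b)\| = (1-\lambda)\|a - b\| \leq (1-\lambda)(h(A,B) + \varepsilon).
\]
For the opposite direction, because $d(\cdot, A)$ is $1$-Lipschitz and $C$ is the closure of the elementary convex combinations, it is enough to control $d(z, A)$ for $z = \lambda x + (1-\lambda) y$ with $x \in A$ and $y \in B$. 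Choosing $a \in A$ with $\|y - a\| \leq d(y, A) + \varepsilon \leq h(A,B) + \varepsilon$, the convexity of $A$ guarantees $\lambda x + (1-\lambda) a \in A$, and
\[
d(z, A) \leq \|z - (\lambda x + (1-\lambda) a)\| = (1-\lambda)\|y - a\| \leq (1-\lambda)(h(A,B) + \varepsilon).
\]
Letting $\varepsilon \to 0$ in both displays closes the argument. The estimate $h(B, C) \leq \lambda\, h(A, B)$ follows by the symmetric argument, this time with the convexity of $B$ playing the role previously played by that of $A$.

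The decisive ingredient is the convexity of the endpoint sets: it is what allows me to keep the ``$A$-coordinate'' $x$ of $z$ fixed and merely replace its ``$B$-coordinate'' $y$ by a nearby point of $A$, so that the resulting combination still lies in $A$. Without this, the only evident point of $A$ near $z$ would be $x$ itself, yielding the useless bound $(1-\lambda)\|y - x\|$ which is not controlled by $h(A,B)$. Everything else is routine: the closure in the definition of $C$ is harmless by continuity of the distance function, and the lower bounds are handed to us for free by the triangle-inequality trick.
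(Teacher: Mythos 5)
Your proof is correct and takes essentially the same route as the paper's: both establish the upper bounds $h(A,\lambda A+(1-\lambda)B)\le(1-\lambda)h(A,B)$ and $h(B,\lambda A+(1-\lambda)B)\le\lambda h(A,B)$ using the convexity of the endpoint sets (the paper phrases this via convexity of the function $d(\cdot,A)$, you via an explicit $\varepsilon$-approximation, which is the same computation), and then obtain the reverse inequalities from the triangle inequality. The only cosmetic difference is that the paper writes the lower bound directly as $h(A,\lambda A+(1-\lambda)B)\ge h(A,B)-h(B,\lambda A+(1-\lambda)B)$ rather than via your sum-forces-equality formulation.
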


\begin{proof}
  We prove the first equality, the second can be shown similarly. For every $x\in A$ we have
  \[
    d(x,\lambda A + (1-\lambda) B) \leq \|x-(\lambda x + (1-\lambda) y)\| = (1-\lambda) \|x-y\|
  \]
  for all $y\in B$ and hence $d(x,\lambda A + (1-\lambda) B) \leq (1-\lambda) d(x,A) \leq (1-\lambda) h(A,B)$. On the other hand, for $\lambda x+ (1-\lambda)y \in \lambda A + (1-\lambda)B$ we have
  \[
    d(\lambda x+ (1-\lambda)y, A) \leq \lambda d(x,A) + (1-\lambda) d(y,A) = (1-\lambda) d(y,A) \leq (1-\lambda) h(A,B).
  \]
  Summing up, this shows that
  \[
    h(A, \lambda A + (1-\lambda) B ) \leq (1-\lambda) h(A,B).
  \]
  The converse inequality is obtained by triangle inequality. Indeed,
  \[
    h(A, \lambda A + (1-\lambda) B) \geq h(A,B) - h(B,\lambda A + (1-\lambda) B)) \geq (1-\lambda) h(A,B).
  \]
\end{proof}

\begin{lemma}\label{lem:CBGeodesic}
  For $\lambda,\mu \in[0,1]$ we have
  \[
    \lambda A + (1-\lambda) (\mu A + (1-\mu)B) = (\lambda + (1-\lambda)\mu) A + (1-\lambda)(1-\mu) B
  \]
  and 
  \[
    h( \lambda A + (1-\lambda) B, \mu A + (1-\mu) B) = |\lambda-\mu| h(A,B).
  \]
\end{lemma}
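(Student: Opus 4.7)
The two claims are of rather different flavor: the first is a purely algebraic/convex-combinatorial identity that rests on convexity of $A$, while the second is a metric statement that I would derive from the first together with Lemma \ref{lem:Enpoints}. I would prove them in that order.

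For the first identity, my plan is to verify first the analogous statement for the \emph{unclosed} combination sets $[U,V]_t := \{tu + (1-t)v : u\in U, v\in V\}$ and then pass to closures. A typical element of $[A,[A,B]_\mu]_\lambda$ has the form $\lambda a_1 + (1-\lambda)(\mu a_2 + (1-\mu) b) = \lambda a_1 + (1-\lambda)\mu a_2 + (1-\lambda)(1-\mu) b$, and since $\lambda + (1-\lambda)\mu = 1 - (1-\lambda)(1-\mu)$, dividing the first two summands by this factor produces a convex combination of $a_1,a_2$, which lies in $A$ by convexity. Thus the element belongs to $[A,B]_{\lambda+(1-\lambda)\mu}$; the reverse inclusion is trivial, taking $a_1 = a_2$. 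Since for fixed $a \in A$ the map $y\mapsto \lambda a + (1-\lambda) y$ is continuous, it carries $\overline{[A,B]_\mu}$ into $\overline{[A,[A,B]_\mu]_\lambda}$, so taking closures preserves the identity.

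For the second identity I would use a reparametrisation trick that reduces everything to Lemma \ref{lem:Enpoints}. Assume without loss of generality $\lambda \geq \mu$ and, setting $C := \mu A + (1-\mu) B$, look for $\alpha \in [0,1]$ such that $\alpha A + (1-\alpha) C = \lambda A + (1-\lambda) B$. Applying the first identity to $\alpha A + (1-\alpha)(\mu A + (1-\mu) B)$ gives $(\alpha + (1-\alpha)\mu) A + (1-\alpha)(1-\mu) B$, and matching coefficients yields $\alpha = (\lambda - \mu)/(1-\mu)$, which lies in $[0,1]$ provided $\mu < 1$; the degenerate case $\mu = 1$ forces $\lambda = 1$ and the claim is trivial. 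Then two applications of Lemma \ref{lem:Enpoints} give
\[
  h\bigl(C,\lambda A + (1-\lambda) B\bigr) = h\bigl(C,\alpha A + (1-\alpha) C\bigr) = \alpha\, h(A,C) = \alpha(1-\mu)\, h(A,B) = (\lambda-\mu)\, h(A,B),
\]
which is the desired identity.

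The only genuinely delicate point in the whole argument is the closure step in the first identity, and it is routine: continuity of the convex-combination map in each variable ensures that the identity of the unclosed sets passes through the closure operation. Everything else is bookkeeping with convex combinations, so I do not expect any serious obstacle.
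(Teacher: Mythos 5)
Your proof is correct and follows essentially the same route as the paper's: the set identity is obtained by absorbing the two $A$-terms into a single convex combination (using convexity of $A$) for one inclusion and taking $a_1=a_2$ for the other, and the metric identity uses exactly the same reparametrisation $\alpha=(\lambda-\mu)/(1-\mu)$ followed by Lemma~\ref{lem:Enpoints}. Your explicit treatment of the closure step and of the degenerate case $\mu=1$ is slightly more careful than the paper's, but the substance is identical.
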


\begin{proof}
  First note that for $x\in A$ and $y\in B$ we have
  \[
    (\lambda + (1-\lambda)\mu) x+ (1-\lambda)(1-\mu) y = \lambda x + (1-\lambda)(\mu x + (1-\mu)y) \in \lambda A + (1-\lambda) (\mu A + (1-\mu) B)
  \]
  and hence $(\lambda + (1-\lambda)\mu) A + (1-\lambda)(1-\mu) B \subset \lambda A + (1-\lambda) (\mu A + (1-\mu)B)$. In order to see the converse inclusion let $x_1,x_2\in A$ and $y\in B$. We have
  \begin{multline*}
    \lambda x_1 + (1-\lambda)(\mu x_2 + (1-\mu)y)\\  = (\lambda+(1-\lambda)\mu) \left(\frac{\lambda}{\lambda+(1-\lambda)\mu} x_1 + \frac{(1-\lambda)\mu}{\lambda+(1-\lambda)\mu} x_2\right) + (1-\lambda)(1-\mu)y\\
    \in (\lambda + (1-\lambda)\mu) A + (1-\lambda)(1-\mu) B
  \end{multline*}
  since $A$ is convex.
  
  For the proof of the second equality, we may assume without loss of generality that $\lambda \geq \mu$ and observe that
  \[
    \lambda A + (1-\lambda) B = sA + (1-s)(\mu A + (1-\mu)B)\qquad\text{for}\qquad s = \frac{\lambda-\mu}{1-\mu}.
  \]
  and hence
  \begin{align*}
    h(\lambda A+(1-\lambda)B, \mu A + (1-\mu) B) &= h(sA + (1-s)(\mu A + (1-\mu) B), \mu A + (1-\mu) B)\\
                                                 &= s h(A, \mu A + (1-\mu) B) = \frac{\lambda-\mu}{1-\mu}(1-\mu) h(A,B)\\
                                                 &= |\lambda-\mu| h(A,B)
  \end{align*}
  by Lemma~\ref{lem:Enpoints}.
\end{proof}

\begin{lemma}\label{lem:BCHyperbolic}
  Given bounded closed and convex sets $A,B,C$ and $\lambda \in[0,1]$ the inequality
  \[
    h(\lambda A+(1-\lambda) B, \lambda A + (1-\lambda) C) \leq (1-\lambda) h(B,C)
  \]
  holds.
\end{lemma}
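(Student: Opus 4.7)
The plan is to verify the inequality by a direct pointwise estimate, exploiting the product structure of the Minkowski combination and the fact that the Hausdorff distance of two sets coincides with that of their closures (so the closure in the definition of $\lambda A + (1-\lambda)B$ can be ignored while estimating distances).

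First I would fix a typical element $z = \lambda a + (1-\lambda) b$ with $a \in A$ and $b \in B$, and show that
\[
  d\bigl(z,\lambda A + (1-\lambda) C\bigr) \leq (1-\lambda)\, h(B,C).
\]
This follows because for every $c \in C$ the point $\lambda a + (1-\lambda) c$ lies in $\lambda A + (1-\lambda) C$, and $\|z - (\lambda a + (1-\lambda) c)\| = (1-\lambda)\|b-c\|$; taking the infimum over $c\in C$ gives $(1-\lambda)\, d(b,C) \le (1-\lambda)\, h(B,C)$. By the symmetry of the roles of $B$ and $C$ in the statement, the same bound holds for elements of $\lambda A + (1-\lambda) C$.

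The last step is to pass from these pointwise estimates on the dense subsets $\{\lambda a + (1-\lambda) b : a\in A, b \in B\}$ and $\{\lambda a + (1-\lambda) c : a\in A, c\in C\}$ to their closures, which are the actual sets in the statement. This is immediate from the observation noted just before Lemma~\ref{lem:Enpoints} that the Hausdorff distance between two sets equals the Hausdorff distance between their closures.

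There is no real obstacle here: the argument is a one-line triangle-type computation, and the only subtlety worth flagging is the role of the closure in the definition of $\lambda A + (1-\lambda) B$, handled as above. Note also that convexity of $A$ is not needed for this lemma; only that $\lambda A + (1-\lambda) B$ and $\lambda A + (1-\lambda) C$ are well-defined as bounded closed sets.
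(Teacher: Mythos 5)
Your proposal is correct and follows essentially the same argument as the paper: a pointwise estimate using the common point $\lambda a$ and the matching point $\lambda a+(1-\lambda)c$, combined symmetrically, with the closure handled via the remark that the Hausdorff distance of two sets equals that of their closures. Your additional observation that convexity of $A$ is not needed here is accurate.
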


\begin{proof}
  For $\lambda x + (1-\lambda)y \in \lambda A + (1-\lambda) B$ we have
  \begin{align*}
    d(\lambda x + (1-\lambda)y, \lambda A + (1-\lambda) C) &\leq \|\lambda x + (1-\lambda) y - (\lambda x + (1-\lambda)z )\| \\ &\leq (1-\lambda) \|y-z\|
  \end{align*}
  for all $z\in C$. Therefore, $d(\lambda x + (1-\lambda)y, \lambda A + (1-\lambda) C) \leq  (1-\lambda) d(y,C)$. Similarly, we may conclude that $d(\lambda x + (1-\lambda)z, \lambda A + (1-\lambda) B) \leq  (1-\lambda) d(z,B)$. Combining these inequalities, we obtain
  \[
    h(\lambda A+(1-\lambda) B, \lambda A + (1-\lambda) C) \leq (1-\lambda) h(B,C)
  \]
  as claimed.
\end{proof}

\begin{proof}[Proof of Theorem \ref{thm: CBCishyperbolic}]
We consider the hyperspace $\mathcal{CB}(D)$ of bounded and convex subsets of $D$ with the Hausdorff distance. We pick for each pair $A,B\subset D$ the unique metric segment of the form
\[
[A,B]=\{\lambda A + (1-\lambda) B\colon \lambda\in [0,1]\}
\]
and consider $\mathcal{S}$ to be the collection of such metric segments. The fact that each $[A,B]$ is a metric segment, follows from Lemmas \ref{lem:Enpoints} and \ref{lem:CBGeodesic}. Due to the construction, these metric segments also contain all of their subsegments. To finish the proof, note that the condition $(ii)$ of Definition \ref{def:hyperbolicspace} follows from Lemma \ref{lem:BCHyperbolic}.
\end{proof}

Another useful consequence that can be derived from Lemma \ref{lem:BCHyperbolic} is that in our setting set-valued strict contractions are dense in the set of set-valued nonexpansive mappings. 

\begin{proposition}\label{prop: NdenseinM}
The set of strict contractions of the form $F\colon C\to \mathcal{CK}(C)$ is dense in the set of all nonexpansive mappings $G\colon C\to \mathcal{CK}(C)$.
\end{proposition}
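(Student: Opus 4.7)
The plan is to perturb $G$ by mixing it with a constant set-valued mapping. I would fix any $K_{0}\in\mathcal{CK}(C)$ (a single point $K_{0}=\{z_{0}\}$ with $z_{0}\in C$ is enough) and, for $\lambda\in(0,1)$, consider
\[
  F_{\lambda}(x):=\lambda K_{0}+(1-\lambda)G(x).
\]
The first step is to check that $F_{\lambda}$ is admissible, i.e.\ that $F_{\lambda}(x)\in\mathcal{CK}(C)$. Convexity of each $F_{\lambda}(x)$ follows from convexity of $K_{0}$ and $G(x)$; containment in $C$ follows from convexity of $C$; and compactness follows from the fact that the Minkowski combination of two compact sets is a continuous image of $K_{0}\times G(x)$, so here the closure in the definition of the sum is unnecessary.

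Next, I would verify that $F_{\lambda}$ is a strict contraction. Applying Lemma~\ref{lem:BCHyperbolic} with $K_{0}, G(x), G(y)$ playing the roles of the lemma's $A, B, C$ and coefficient $\lambda$ yields
\[
  h\bigl(F_{\lambda}(x),F_{\lambda}(y)\bigr)\leq (1-\lambda)\,h\bigl(G(x),G(y)\bigr)\leq (1-\lambda)\|x-y\|,
\]
so $\lip F_{\lambda}\leq 1-\lambda<1$, confirming that $F_{\lambda}\in\mathcal{M}$ is a strict contraction.

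Finally, I would estimate the uniform distance to $G$ using Lemma~\ref{lem:Enpoints}: taking $A=K_{0}$ and $B=G(x)$ gives
\[
  h\bigl(G(x),F_{\lambda}(x)\bigr)=\lambda\,h\bigl(K_{0},G(x)\bigr)\leq\lambda\diam C.
\]
Taking the supremum over $x\in C$ yields $d_{\infty}(G,F_{\lambda})\leq\lambda\diam C$. Given $\varepsilon>0$, any $\lambda\in(0,\varepsilon/\diam C)$ (the degenerate case $\diam C=0$ being trivial) produces a strict contraction within $\varepsilon$ of $G$, which gives density.

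There is no serious obstacle here: the two earlier lemmas package exactly what is needed — Lemma~\ref{lem:BCHyperbolic} provides the contraction factor $(1-\lambda)$ and Lemma~\ref{lem:Enpoints} controls the distance to the original mapping. The only care required is to identify the correct roles of the sets in the lemmas, so that the fixed set $K_{0}$ plays the role of the \emph{shared} summand and $G(x)$, $G(y)$ are the varying ones.
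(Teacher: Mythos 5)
Your proof is correct and is essentially the paper's own argument: the paper likewise sets $F(x):=(1-\gamma)G(x)+\gamma A$ for a fixed convex set $A\subset C$ and small $\gamma$, invoking Lemma~\ref{lem:BCHyperbolic} for the contraction factor and Lemma~\ref{lem:Enpoints} for the distance estimate. Your choice of a \emph{compact} $K_{0}$ (e.g.\ a singleton) and the explicit check that $F_{\lambda}(x)\in\mathcal{CK}(C)$ is a small extra care the paper glosses over, but the route is the same.
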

\begin{proof}
Fix an arbitrary nonexpansive mapping $G\colon C\to \mathcal{CK}(C)$, $\delta>0$, and $\gamma>0$ such that $\gamma\diam(C)<\delta$. Choose some bounded closed and convex $A\subset C$ and define for $G$ a mapping $F\colon C\to \mathcal{CK}(C)$ by setting $F(x):=(1-\gamma)G(x)+\gamma A$ for every $x\in C$. Note that by Lemma~\ref{lem:BCHyperbolic} $F$ is a strict contraction and by Lemma~\ref{lem:Enpoints} we get for every $x\in C$ that
$$h(G(x),F(x))=h(G(x),(1-\gamma)G(x)+\gamma A)=\gamma h(G(x),A)\leq \gamma\diam C<\delta,$$
because for any $A,B\subset C$ we have $h(A,B)\leq \diam (C)$. This concludes the result.
\end{proof}

We end this section with the following result that later on allows us to perturb a set-valued mapping without losing control over its Lipschitz constant.

\begin{proposition}\label{prop:ConvexLip}
  Let $X$ be a Banach space, $C\subset X$ a closed convex subset, $A\subset C$ a bounded, closed and convex set and $F\colon C\to \mathcal{CK}(C)$ a nonexpansive mapping. For a Lipschitz mapping $\lambda\colon C\to [0,1]$ the inequality
  \begin{multline*}
    h(\lambda(x)A+(1-\lambda(x)) F(x), \lambda(y)A+(1-\lambda(y)) F(y)) \\\leq (\lip F + h(A,F(y))\lip \lambda) \|x-y\|
  \end{multline*}
  is satisfied.
\end{proposition}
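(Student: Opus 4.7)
The plan is to insert an intermediate convex combination and use triangle inequality to split the Hausdorff distance into two pieces, one controlled by the Lipschitz constant of $F$ (via Lemma~\ref{lem:BCHyperbolic}) and the other by $\lip\lambda$ (via Lemma~\ref{lem:CBGeodesic}). The only subtle point is choosing the intermediate set so that the factor accompanying $\lip\lambda$ involves $h(A,F(y))$ rather than $h(A,F(x))$.

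Concretely, I would introduce the set $M := \lambda(x)A+(1-\lambda(x))F(y)$ and write
\[
h(\lambda(x)A+(1-\lambda(x))F(x),\,\lambda(y)A+(1-\lambda(y))F(y)) \le h(\lambda(x)A+(1-\lambda(x))F(x),M)+h(M,\lambda(y)A+(1-\lambda(y))F(y)).
\]
For the first term, Lemma~\ref{lem:BCHyperbolic} (with the common convex set $\lambda(x)A$ at the coefficient $\lambda(x)$) gives
\[
h(\lambda(x)A+(1-\lambda(x))F(x),\,\lambda(x)A+(1-\lambda(x))F(y)) \le (1-\lambda(x))\,h(F(x),F(y)) \le \lip F\cdot\|x-y\|,
\]
using nonexpansiveness-like bound $h(F(x),F(y))\le \lip F\cdot\|x-y\|$ and $(1-\lambda(x))\le 1$.

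For the second term, the two sets lie on the metric segment $[A,F(y)]$ at parameters $\lambda(x)$ and $\lambda(y)$, so Lemma~\ref{lem:CBGeodesic} yields
\[
h\bigl(\lambda(x)A+(1-\lambda(x))F(y),\,\lambda(y)A+(1-\lambda(y))F(y)\bigr) = |\lambda(x)-\lambda(y)|\,h(A,F(y)) \le \lip\lambda\cdot h(A,F(y))\cdot\|x-y\|.
\]
Summing the two estimates gives exactly the claimed bound $(\lip F+h(A,F(y))\lip\lambda)\|x-y\|$.

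There is no real obstacle: both needed ingredients were established in the previous lemmas, and the only bit of care is choosing $\lambda(x)A+(1-\lambda(x))F(y)$ rather than the symmetric alternative $\lambda(y)A+(1-\lambda(y))F(x)$, so that the second term uses $h(A,F(y))$ as stated.
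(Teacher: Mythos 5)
Your proposal is correct and follows essentially the same route as the paper: the same intermediate set $\lambda(x)A+(1-\lambda(x))F(y)$, the same application of Lemma~\ref{lem:BCHyperbolic} to the first term and of Lemma~\ref{lem:CBGeodesic} to the second. Your remark about choosing this intermediate set (rather than the symmetric alternative) to land on $h(A,F(y))$ is exactly the right observation.
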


\begin{proof}
  Using the triangle inequality together with Lemmas~\ref{lem:CBGeodesic} and~\ref{lem:BCHyperbolic} we obtain
  \begin{align*}
    h(\lambda(x) A + (1-\lambda(x)) &F(x), \lambda(y) A + (1-\lambda(y)) F(y))\\
                                          &\leq h(\lambda(x) A+ (1-\lambda(x)) F(x), \lambda(x) A + (1-\lambda(x))F(y)) \\
                                          & \qquad + h(\lambda(x) A + (1-\lambda(x))F(y), \lambda(y) A + (1-\lambda(y)) F(y))\\
                                    & \leq (1-\lambda(x)) h(F(x), F(y)) + |\lambda(x)-\lambda(y)| h(A,F(y)) \\
                                    &\leq (\lip F + h(A,F(y)) \lip \lambda) \|x-y\|
  \end{align*}
  which is the required inequality.
\end{proof}

\begin{remark}
We have stated Propositions \ref{prop: NdenseinM} and \ref{prop:ConvexLip} according to the needs of this paper. Note that the proofs of these statements actually do not use compactness, therefore these results also hold for the more general case of the hyperspace $\mathcal{CB}(C)$ instead of $\mathcal{CK}(C)$. 
\end{remark}

\section{Continuity Properties of Metric Projections}

We are interested in convex sets with the property that each point has a unique projection onto this set. For this reason we use the following notion of rotundity which is a slight modification of $X^*$-rotundity introduced by V.~Klee in~\cite{Klee1959SmoothnessConvexity}.

Given a Banach space $X$, a nonzero functional $x^*\in X^*$ , and $r\in \mathbb{R}$ we say a hyperplane $H=\{x\in X\colon \langle x^*,x\rangle=r\}$ is supporting $C\subset X$ at a point $z\in C$ if $\langle x^*,z\rangle =r$ and either $\sup_{x\in C}\langle x^*,x\rangle\leq r$ or  $\inf_{x\in C}\langle x^*,x\rangle\geq r$. Note that in Klee's work, it is demanded here also that the set $C$ is not contained in $H$, this is where our concept diverges a little from that of Klee's. Hence, our definition of rotundity is more restrictive since in contrast to Klee  we formally have more supporting hyperplanes for any point.

\begin{definition}
  Let $X$ be a Banach space and $C\subset X$ be a convex set. We call $C$ \emph{rotund} if for every $z\in C$ and every hyperplane $H$ supporting $C$ in $z$ we have that $H\cap C = \{z\}$. A point $z\in C$ is called \emph{support point} if there is a hyperplane supporting $C$ in $z$, and a \emph{non-support point} otherwise.
\end{definition}

The following characterisation of support points will come in handy and shows that points that reduce the distance to a point outside a convex set in a certain geometrical sense behave like the topological boundary of this set. This characterisation is a direct consequence of the characterisation of the (set-valued) metric projection via a variational inequality. Since the following direct proof is rather short, we include it nevertheless for the convenience of the reader.

\begin{proposition}\label{prop:HyperplaneAndProjection}
  Let $X$ be a Banach space and $C\subset X$ a closed convex set. A point $z\in C$ is a support point if and only if there is a point $x\not\in C$ with $\|x-z\|=d(x,C)$.
\end{proposition}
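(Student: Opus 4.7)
The plan is to prove both implications, handling the backward one by a single application of the geometric Hahn--Banach theorem and the forward one by an explicit construction. For the backward implication, suppose $x\notin C$ satisfies $\|x-z\|=d(x,C)$, and set $r:=\|x-z\|>0$. Let $U:=B(x,r)$ be the corresponding open ball. By the definition of $d(x,C)$ every point of $C$ lies outside $U$, so $U$ and $C$ are two disjoint nonempty convex subsets of $X$ with $U$ open. The geometric Hahn--Banach theorem then furnishes a nonzero $x^{*}\in X^{*}$ and a scalar $\alpha\in\mathbb{R}$ with $\langle x^{*},y\rangle<\alpha$ for every $y\in U$ and $\langle x^{*},y\rangle\geq\alpha$ for every $y\in C$; by continuity of $x^{*}$ the strict inequality passes to $\langle x^{*},y\rangle\leq\alpha$ on $\overline{U}=\overline{B}(x,r)$. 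Since $z\in C$ and $\|z-x\|=r$, the point $z$ lies in both $C$ and $\overline{U}$, forcing $\langle x^{*},z\rangle=\alpha$. Hence $H=\{y:\langle x^{*},y\rangle=\alpha\}$ is a hyperplane through $z$ supporting $C$ in the sense of the paper.

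For the forward implication, let $H=\{y:\langle x^{*},y\rangle=r\}$ be a supporting hyperplane of $C$ at $z$; after normalising and possibly replacing $x^{*}$ by $-x^{*}$, I may assume $\|x^{*}\|=1$ and $\langle x^{*},y\rangle\leq r$ for all $y\in C$. The strategy is to choose a unit vector $v\in X$ at which $x^{*}$ attains its norm, i.e.\ $\langle x^{*},v\rangle=1$, and set $x:=z+tv$ for some $t>0$. Then $\|x-z\|=t$ and $\langle x^{*},x\rangle=r+t>r$, so $x\notin C$, while for every $y\in C$
\[
\|x-y\|\geq\langle x^{*},x-y\rangle=r+t-\langle x^{*},y\rangle\geq t,
\]
yielding $d(x,C)\geq t=\|x-z\|$; together with the trivial bound $d(x,C)\leq\|x-z\|$ this produces the required equality.

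The step I expect to cause trouble is precisely the choice of the norm-attaining direction $v$: a generic continuous functional on a Banach space need not attain its norm, so the construction above is not literally available for every supporting hyperplane. In reflexive spaces James's theorem supplies such a $v$ for every nonzero $x^{*}$ and the argument goes through unchanged; more generally one can either choose the supporting functional $x^{*}$ a priori among the norm-attaining ones, or, as the authors indicate in the sentence preceding the statement, appeal directly to the standard variational-inequality characterisation of the (set-valued) metric projection, which packages precisely the correspondence between supporting functionals at $z$ and points of $X\setminus C$ admitting $z$ as a nearest point.
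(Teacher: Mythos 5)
Your proof of the implication ``nearest point $\Rightarrow$ support point'' is correct and is exactly the paper's argument: separate the open ball $B(x,\|x-z\|)$ from $C$ by Hahn--Banach and observe that $z$, lying both in $C$ and in the closed ball, must belong to the separating hyperplane. For the converse, your construction is the paper's in different clothes: the paper picks $\tilde z\in H$ with $\|\tilde x-\tilde z\|=d(\tilde x,H)$ and sets $x=\tilde x+z-\tilde z$, and the existence of such a $\tilde z$ is precisely the statement that $x^*$ attains its norm (at the normalised $\tilde x-\tilde z$) --- the same assumption you make when you choose $v$ with $\langle x^*,v\rangle=\|x^*\|$. So the step you flag is not a defect of your version alone; it is tacitly used in the paper's own proof.

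The gap is genuine and your suggested repairs do not close it in general. Bishop--Phelps gives density of norm-attaining functionals, not that every support point admits a norm-attaining supporting functional, so one cannot always ``choose $x^*$ a priori among the norm-attaining ones'': in a non-reflexive space there are closed, even compact, convex sets with a support point whose entire normal cone is spanned by a non-norm-attaining functional. For instance, take $X=c_0$, $x^*=(2^{-n})_{n}\in\ell_1$ and $C=\{y\in c_0\colon \langle x^*,y\rangle\le 0,\ |y_n|\le 4^{-n}\text{ for all }n\}$; then $C$ is compact and convex, $0$ is a support point, every functional supporting $C$ at $0$ is a multiple of the non-norm-attaining $x^*$, whereas any $x\notin C$ with $\|x\|=d(x,C)$ would, by the separation argument of the first implication, produce a norm-attaining functional in that normal cone. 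Hence the ``only if'' direction actually fails there; the variational-inequality characterisation you invoke ties nearest points to \emph{norm-attaining} supporting functionals, which outside reflexive spaces is strictly stronger than being a support point. None of this affects the paper's results, since only the implication from nearest points to supporting hyperplanes is used later (in the remark following Lemma~\ref{lem:diamPC}), and your proof of that implication is complete.
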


\begin{proof}
  Assume that there is $x\in X\setminus C$ with $\|x-z\|=d(x,C)$. Then the open ball $B(x,\|x-z\|)$ and $C$ are disjoint convex sets which by the Hahn-Banach Theorem can be separated by a closed hyperplane which necessarily has to contain the point $z$. This hyperplane is the required supporting hyperplane.

  If on the other hand there is a functional $x^*\in X^*$ and $a\in \mathbb{R}$ with $\langle x^*,z \rangle = a \geq \langle x^*,y\rangle$ for all $y\in C$, we denote by $H$ the hyperplane defined by $x^*$ and $a$ and pick $\tilde{x}$ with $\langle x^*, \tilde{x}\rangle <a$ and a point $\tilde{z}$ with $\|\tilde{x}-\tilde{z}\|=d(\tilde{x},H)$. Setting $x = \tilde{x} + z-\tilde{z}$ we observe that
  \[
    \|x-z\| = \|\tilde{x}-\tilde{z}\| = d(\tilde{x},H) = d(x,H).
  \]
  Since $C$ and $x$ are on different sides of $H$, we conclude that $\|x-z\|=d(x,C)$.
\end{proof}

The following proposition shows that rotundity is indeed the property we need to additionally consider in order to guarantee the uniqueness of projections, since then for whatever point we choose in the space, there is a unique element in that rotund set that realises its distance to the same set.
\begin{proposition}\label{prop:StrConProjUnique}
  Let $X$ be a Banach space and $C\subset X$ be a rotund weakly compact set. For every $x\in X$ there is a unique $z\in C$ with $\|x-z\|=d(x,C)$, i.e. a unique point minimising the distance to~$C$.
\end{proposition}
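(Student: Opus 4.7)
I would split the proof into existence and uniqueness. The existence part is short and uses weak compactness; the content is in uniqueness, where the hypothesis of rotundity together with a Hahn--Banach separation does the job. If $x\in C$ then $z=x$ works trivially, so from now on assume $x\notin C$.

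For existence, the function $y\mapsto \|x-y\|$ is continuous and convex on $X$, hence weakly lower semicontinuous. Since $C$ is weakly compact, this function attains its minimum on~$C$, giving some $z\in C$ with $\|x-z\|=d(x,C)=:r>0$.

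For uniqueness, suppose $z_1,z_2\in C$ both satisfy $\|x-z_i\|=r$. The open ball $B(x,r)$ and the convex set $C$ are disjoint, so the Hahn--Banach separation theorem (applied with one open convex set) yields a nonzero functional $x^*\in X^*$ and $\alpha\in\mathbb{R}$ with $\langle x^*,u\rangle<\alpha$ for every $u\in B(x,r)$ and $\langle x^*,y\rangle\geq\alpha$ for every $y\in C$. By continuity the first inequality extends to $\langle x^*,u\rangle\leq\alpha$ on $\overline{B(x,r)}$. Since $z_1,z_2\in C\cap\overline{B(x,r)}$, both values $\langle x^*,z_i\rangle$ equal $\alpha$, so $z_1$ and $z_2$ lie on the hyperplane $H=\{y\in X\colon \langle x^*,y\rangle=\alpha\}$. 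By construction $H$ supports $C$ at $z_1$ in the sense of the paper (namely $\inf_{y\in C}\langle x^*,y\rangle\geq\alpha$). Rotundity of $C$ then forces $H\cap C=\{z_1\}$, and since $z_2\in H\cap C$, we conclude $z_1=z_2$.

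The only delicate point is to make sure both candidate minimizers end up on the same supporting hyperplane. This is arranged by separating the open ball $B(x,r)$ from $C$ rather than the closed ball, so that the strict inequality on the ball side combined with passage to the closure pins $z_1$ and $z_2$ onto $H$ simultaneously. After that, the restrictive notion of rotundity fixed in the paper (every supporting hyperplane intersects $C$ in a single point, with no exception for hyperplanes containing $C$) makes the conclusion immediate.
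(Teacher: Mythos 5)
Your proof is correct, and its engine is the same as the paper's: Hahn--Banach separation of the open ball $B(x,r)$ from $C$ followed by an appeal to rotundity. The organisation differs slightly. The paper first observes, using convexity of the norm, that the midpoint $\tfrac{1}{2}(z_1+z_2)$ is also a minimiser, takes a supporting hyperplane there (whose existence is exactly Proposition~\ref{prop:HyperplaneAndProjection}, itself proved by the same ball-separation), and then a short algebraic computation forces $z_1$ and $z_2$ onto that hyperplane. You skip the midpoint entirely: since each $z_i$ lies in $C\cap\overline{B(x,r)}$, the strict inequality on the open ball together with passage to the closure pins both minimisers onto the separating hyperplane at once, and rotundity at $z_1$ finishes the argument. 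Your version is marginally more direct; the paper's version has the small advantage of isolating the separation step in Proposition~\ref{prop:HyperplaneAndProjection} and reusing it. Your explicit treatment of existence via weak lower semicontinuity of the norm fills in a step the paper only asserts, and your reduction to the case $x\notin C$ (so that $r>0$ and $B(x,r)$ is a nonempty open convex set) is the right precaution.
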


\begin{proof}
  First note that since $C$ is weakly compact, the set of points in $C$ minimising the distance to $x$ is nonempty. Assume we have $z_1,z_2\in C$ with
  \[
    \|x-z_1\| = d(x,C) = \|x-z_2\|.
  \]
  Since the norm is a convex function we may conclude that also $\frac{1}{2}(z_1+z_2)$ minimises the distance to $x$. We pick a supporting hyperplane $H$ in $\frac{1}{2}(z_1+z_2)$ defined by $x^*\in X^*$ and $a\in \mathbb{R}$, i.e. $H=\{x\in X\colon \langle x^*,x\rangle = a\}$. We observe that
  \[
    a \leq \langle x^*, z_1\rangle = 2 \left(\langle x^*, \frac{1}{2}(z_1+z_2)\rangle - \frac{1}{2} \langle x^*, z_2\rangle \right) \leq a
  \]
  which by rotundity implies that $z_1=z_2$.
\end{proof}

\begin{definition}
  Let $X$ be a Banach space, $C\subset X$ a rotund weakly compact set and $x\in X$. The unique element of $C$ minimising the distance to $X$ is denoted by $P_Cx$. The mapping
  \[
    X \to C, \qquad x \mapsto P_Cx
  \]
  is called the \emph{metric projection onto $C$}.
\end{definition}

The following result regarding the continuity of the metric projection is a particular case of Theorem~3 in~\cite{Wulbert1968} but since the proof is rather short, we include it for the convenience of the reader.

\begin{proposition}
  Let $X$ be a Banach space and $C\subset X$ a rotund compact set. The metric projection
  \[
    P_C\colon X \to C, \qquad x\mapsto P_Cx
  \]
  is continuous.
\end{proposition}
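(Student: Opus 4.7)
The plan is to establish sequential continuity. Given an arbitrary sequence $x_n\to x$ in $X$, I aim to show $P_Cx_n\to P_Cx$. The natural approach exploits that $C$ is compact: the sequence $(P_Cx_n)$ automatically admits convergent subsequences, and the uniqueness of the projection provided by Proposition~\ref{prop:StrConProjUnique} should force each such subsequential limit to be $P_Cx$, which via a standard subsequence argument yields convergence of the whole sequence.

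Concretely, I would take an arbitrary convergent subsequence $P_Cx_{n_k}\to z\in C$, which exists by compactness of $C$. From the defining equality $\|x_{n_k}-P_Cx_{n_k}\|=d(x_{n_k},C)$ together with the fact that $d(\cdot,C)$ is $1$-Lipschitz (so $d(x_{n_k},C)\to d(x,C)$), passing to the limit gives $\|x-z\|=d(x,C)$. Thus $z$ minimizes the distance from $x$ to $C$, and since a compact convex set is certainly weakly compact, Proposition~\ref{prop:StrConProjUnique} applies to yield $z=P_Cx$.

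Every convergent subsequence of $(P_Cx_n)$ therefore has the same limit $P_Cx$, and because these terms live in the compact set $C$, the whole sequence must converge to $P_Cx$. This establishes continuity of $P_C$. The three ingredients are compactness (to produce subsequential limits), the $1$-Lipschitz property of $d(\cdot,C)$ (to transfer the minimization equality through the limit), and the uniqueness result. There is no real obstacle here; the argument is essentially the standard compactness-plus-uniqueness trick, and the only nontrivial input is rotundity, which has already been absorbed into Proposition~\ref{prop:StrConProjUnique}.
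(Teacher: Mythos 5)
Your argument is correct and coincides with the paper's own proof: both establish sequential continuity by extracting a convergent subsequence of the projections via compactness of $C$, passing the minimization identity $\|x_{n_k}-P_Cx_{n_k}\|=d(x_{n_k},C)$ to the limit, and invoking the uniqueness from Proposition~\ref{prop:StrConProjUnique} to identify every subsequential limit with $P_Cx$. The only cosmetic difference is that you spell out the $1$-Lipschitz property of $d(\cdot,C)$, which the paper uses implicitly.
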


The proof of is basically the one given for Proposition~3.6 in~\cite{BMT2021:MetriProjections} where the results is stated for rotund compact sets with nonempty interior.

\begin{proof}
  Let $x\in X$ and $(x_k)_{k\in\mathbb{N}}$ be a sequence converging to $x$. We denote $z_k:=P_{C}x_k$. Since $(z_k)_{k\in\mathbb{N}}$ sits in the compact set $C$ it  has a convergent subsequence $(z_{k_m})_{m\in\mathbb{N}},$ i.e. there is $z\in C$ such that $z_{k_m}\rightarrow z.$ Observe that
  \[
    \|x-z\| = \lim_{m\to\infty}\|x_{k_m}-z_{k_m}\|=\lim_{m\to\infty}d(x_{k_m},C)=d(x,C).
  \]
  Consequently, since $C$ is rotund, $z=P_Cx$ by Proposition~\ref{prop:StrConProjUnique} and $P_Cx$ is the only accumulation point of the sequence $(z_k)_{k\in\mathbb{N}}$. Since it is contained in a compact set, this means that $z_k\to P_Cx$ as claimed (see e.g. the Corollary to Theorem~1 in~\cite[p.~85]{Bou1998:Topology1-4}).
\end{proof}

In order to obtain continuity also with respect to compact sets, we need the following technical lemma.

\begin{lemma}\label{lem:CompSeqUnion}
  Let $(C_k)_{k\in\mathbb{N}}$ be a sequence of compact sets converging to a compact set~$C$ in Hausdorff distance. The set
  \[
    C \cup \bigcup_{k=1}^{\infty} C_k
  \]
  is compact.
\end{lemma}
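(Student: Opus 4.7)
The plan is to establish compactness of $S := C \cup \bigcup_{k=1}^{\infty} C_k$ by verifying sequential compactness, which is equivalent to compactness in the metric-space setting of our Banach space. Given an arbitrary sequence $(y_n)_{n\in\mathbb{N}} \subset S$, the goal is to extract a subsequence converging to some point of $S$.

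I would then perform a case distinction on the sequence $(y_n)$. If one of the compact sets $C, C_1, C_2, \ldots$ contains infinitely many terms~$y_n$, the compactness of that particular set immediately yields a convergent subsequence whose limit lies in~$S$, and the proof is complete. Otherwise, after dropping the finitely many terms lying in~$C$ and choosing for each remaining $y_n$ some index $k_n$ with $y_n \in C_{k_n}$, no value can occur infinitely often among the $k_n$ (else the corresponding $C_{k_n}$ would contain infinitely many terms), so after extraction I may assume $k_n \to \infty$.

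The heart of the argument is then to compare each $y_n$ with a nearby point of the limit set~$C$. Since $h(C_{k_n}, C) \to 0$, I can choose $z_n \in C$ with
\[
  \|y_n - z_n\| \leq h(C_{k_n}, C) + \tfrac{1}{n},
\]
so that $\|y_n - z_n\| \to 0$. By the compactness of~$C$, a subsequence $(z_{n_j})$ converges to some $z \in C$, and the triangle inequality forces $y_{n_j} \to z \in C \subset S$.

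The main obstacle is really only the clean handling of the case distinction, in particular the verification that one may extract so that $k_n \to \infty$ in the second case; all the analytic content reduces to the elementary fact that Hausdorff convergence lets one approximate points of $C_k$ arbitrarily well by points of $C$ for large~$k$. An alternative route would be to check instead that $S$ is closed and totally bounded in the ambient Banach space and invoke completeness, but the sequential-compactness argument above appears more direct.
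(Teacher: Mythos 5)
Your proof is correct, but it takes a different route from the paper. You verify sequential compactness directly: you split into the case where a single one of the sets $C, C_1, C_2,\ldots$ captures infinitely many indices of the sequence (handled by compactness of that one set) and the case where the chosen indices $k_n$ necessarily tend to infinity, in which $h(C_{k_n},C)\to 0$ lets you shadow $(y_n)$ by a sequence in the compact set $C$ and pass to a convergent subsequence there. The paper instead proves that the union is complete and totally bounded: an $\varepsilon/2$-net of $C$ inflates to an $\varepsilon$-net of $C\cup\bigcup_{n\geq N}C_n$ once $h(C_n,C)<\varepsilon/2$ for $n\geq N$, finitely many further balls cover $\bigcup_{n<N}C_n$, and closedness is checked by essentially the same dichotomy you use (a convergent sequence either stays in finitely many of the $C_n$ or has a subsequence with indices tending to infinity, forcing its limit into $C$). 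Your argument is the more economical one in the metric setting, since a single sequential extraction replaces the two separate verifications, at the cost of invoking the equivalence of compactness and sequential compactness; the paper's version isolates total boundedness explicitly, which is the standard template when one wants to avoid that equivalence. One small point worth making explicit in your write-up: in the second case no extraction is actually needed to get $k_n\to\infty$, since once every fixed index value occurs only finitely often among the $k_n$, the whole sequence of indices already diverges.
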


\begin{proof}
  We show that this set is complete and totally bounded. Given $\varepsilon> 0$ we choose $N\in\mathbb{N}$ with $h(C_n,C)<\varepsilon/2$ for all $n\geq N$. Since $C$ is compact, we may find finitely many points $z_1,\ldots,z_K\in C$ with $C \subset \bigcup_{k=1}^{K} B(z_k,\varepsilon/2)$. Now, we have
  \[
    D:=C \cup \bigcup_{n= N}^{\infty} C_n \subset \bigcup_{k=1}^{K} B(z_k,\varepsilon)
  \]
  since for every $y\in C_n$ if $n\geq N$ we may pick a $z\in C$ with $\|y-z\|<\varepsilon/2$ and a $z_k$ with $\|z-z_k\|<\varepsilon/2$. As a finite union of compact sets $\bigcup_{n=1}^{N-1} C_{n}$ is compact and hence we can add finitely many balls of radius $\varepsilon$ to extend the above to a finite cover of the whole set. In order to show that the set is closed, let $(x_k)_{k\in\mathbb{N}}$ be a sequence in~$D$ converging to some $x\in X$. If the sequence remains in finitely many of the $C_n$, it is clear that also $x$ has to belong to one of these. In the other case, we may pick a subsequence $(x_{k_m})_{m\in\mathbb{N}}$ with $k_{m}>k_{m-1}$ and $x_{k_m}\in C_{k_m}$. Since
  \[
    d(x_{k_m},C) \leq h(C_{k_m},C) \to 0
  \]
  we conclude that $d(x,C)=0$ and hence $x\in C\subset D$, as required.
\end{proof}

\begin{proposition}\label{prop:contMetrProjSaP}
  Let $(C_k)_{k\in\mathbb{N}}$ be a sequence of compact rotund sets converging to a compact rotund set $C$. Then,
  \[
    P_{C_k}x_k\to P_Cx
  \]
  for every $x_k\to x$ in $X$. More generally, if the $C_k$ are convex compact sets (not necessarily rotund) and $z_k\in C_k$ with $\|x_k-z_k\|=d(x_k,C_k)$, we have $z_k\to P_Cx$.
\end{proposition}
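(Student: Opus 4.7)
The plan is to imitate the proof of the preceding continuity result, but with the single set $C$ replaced by the whole sequence $(C_k)$, using Lemma~\ref{lem:CompSeqUnion} to locate all the projection candidates inside one fixed compact set. I will prove the more general statement, since the first one is just a special case (where $z_k$ is forced to equal $P_{C_k}x_k$ by rotundity).

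First I would set $K := C \cup \bigcup_{k\in\mathbb{N}} C_k$ and invoke Lemma~\ref{lem:CompSeqUnion} to conclude that $K$ is compact. Since each $z_k$ lies in $C_k \subset K$, the sequence $(z_k)$ admits a convergent subsequence $z_{k_m} \to z$ with $z\in K$. The standard triangle-type inequality for distances to sets gives
\[
  d(z_k, C) \le \|z_k - z_k\| + h(C_k, C) = h(C_k, C) \to 0,
\]
so that $d(z, C) = 0$ and hence $z \in C$ by closedness.

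Next I would identify the limit by showing $\|x - z\| = d(x, C)$. The key observation is that the distance function to a set is $1$-Lipschitz jointly in the point and the set with respect to Hausdorff distance:
\[
  \bigl| d(x_{k_m}, C_{k_m}) - d(x, C) \bigr| \le \|x_{k_m}-x\| + h(C_{k_m}, C) \longrightarrow 0.
\]
Combining this with $\|x_{k_m} - z_{k_m}\| = d(x_{k_m}, C_{k_m})$ and $x_{k_m} \to x$, $z_{k_m}\to z$, I obtain $\|x - z\| = d(x, C)$. Since $C$ is rotund and compact, Proposition~\ref{prop:StrConProjUnique} forces $z = P_C x$.

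The final step is the standard subsequence-of-a-subsequence argument: every subsequence of $(z_k)$ itself sits in the compact set $K$, so it admits a further subsequence convergent to some $z'\in C$, and the reasoning above shows $z' = P_C x$. Hence $P_C x$ is the only accumulation point of $(z_k)$ in $K$, which together with compactness of $K$ yields $z_k \to P_C x$. The specialisation to $z_k = P_{C_k} x_k$ gives the first assertion; I do not expect any real obstacle here, since rotundity of the $C_k$ is never used — it is only rotundity of the limit $C$ that is needed to single out the limit point via Proposition~\ref{prop:StrConProjUnique}.
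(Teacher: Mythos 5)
Your proposal is correct and follows essentially the same route as the paper's proof: compactness of $C\cup\bigcup_k C_k$ via Lemma~\ref{lem:CompSeqUnion}, extraction of a convergent subsequence, identification of the limit as $P_Cx$ via Proposition~\ref{prop:StrConProjUnique}, and the unique-accumulation-point argument to upgrade to full convergence. The only blemish is the typo $\|z_k-z_k\|$ (you mean $d(z_k,C_k)=0$), which does not affect the argument.
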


\begin{proof}
  We use the notation
  \[
    z_k := P_{C_k}x_k
  \]
  and note that the sequence $(z_k)_{k\in\mathbb{N}}$ is contained in $C \cup \bigcup_{k=1}^{\infty} C_k$ and the latter set is compact by Lemma~\ref{lem:CompSeqUnion}. Hence, there is a convergent subsequence $z_{k_m}\to z$. Note that by definition $z_{k_m}\in C_{k_m}$ and therefore
  \[
    d(z,C) = \lim_{m\to\infty} d(z_{k_m},C) \leq \lim_{m\to\infty} h(C_{k_m},C) = 0
  \]
  and consequently $z\in C$. Moreover, 
  \begin{align*}
    \|x-z\| &= \lim_{m\to\infty} \|x_{k_m}-z_{k_m}\| = \lim_{m\to\infty} d(x_{k_m},C_{k_m}) \leq  \lim_{m\to\infty} (d(x_{k_m},C) + h(C,C_{k_m}))\\
    & = d(x,C)
  \end{align*}
  where the last inequality follows from $d(x,C)\leq d(x,C_{k_m}) + h(C_{k_m},C)$ and the corresponding inequality where the roles of $C$ and $C_{k_m}$ are exchanged. Since $C$ is rotund, we may now use Proposition~\ref{prop:StrConProjUnique} to conclude that $z=P_Cx$. We have shown that $P_Cx$ is the only accumulation point of the sequence $z_k$ and hence the sequence, as it is contained in a compact set, converges to $P_Cx$ (see e.g. the Corollary to Theorem~1 in~\cite[p.~85]{Bou1998:Topology1-4}).
\end{proof}

\begin{remark}
  Given a sequence $(C_k)_{k\in\mathbb{N}}$ of convex compact sets converging to a rotund compact set~$C$ and sequence $x_k\to X$, using the notation
  \[
    \tilde{P}_{C_k}x_k = \{z\in C_k\colon \|x_k-z\| = d(x_k,C_k)\}, 
  \]
  the proof of the above proposition, shows that $h(\tilde{P}_{C_k}x_k, P_Cx) \to 0$ for $k\to\infty$.
\end{remark}

The rest of the section is dedicated to showing that the typical compact convex subset of a separable $C$ in the hyperspace $\mathcal{CK}(C)$ is actually rotund. In order to do so, we follow in general the ideas of V.~Klee from \cite{Klee1959SmoothnessConvexity}, where he showed that a similar result holds for the case of $X^*$-rotundity. For clarity, we state many of the results with proofs, modifying them according to our purposes.

\begin{proposition}[Part of 1.5~Theorem in~\cite{Klee1959SmoothnessConvexity}]\label{prop:KleeStrConv}
  Let $X$ be a separable Banach space and $C\subset X$ a bounded closed and convex set with at least one non-support point. For every $\varepsilon \in (0,1)$ there is a bounded rotund and convex set $K$ with $(1-\varepsilon) C \subset K \subset C$. 
\end{proposition}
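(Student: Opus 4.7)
The plan is to follow V.~Klee's strategy in~\cite{Klee1959SmoothnessConvexity}: construct $K$ as a sublevel set of a strictly convex functional that combines the Minkowski functional of $C$ with a strictly convex ``quadratic'' built from separability of $X$. By translation we may assume the given non-support point is the origin, so $0\in C$ and, by convexity, $(1-\varepsilon)C\subset C$.

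Since $X$ is separable, Hahn-Banach applied to a countable dense subset of $X$ furnishes a countable family $(x_n^*)_{n\geq 1}$ in the closed unit ball of $X^*$ that separates the points of $X$. Define
\[
  \phi(x) := \sum_{n\geq 1} 2^{-n}\,\langle x_n^*, x\rangle^2,
\]
whose partial sums converge uniformly on bounded sets (since $|\langle x_n^*, x\rangle|\leq\|x\|$), giving a continuous, bounded-on-bounded and strictly convex function on $X$ (strict convexity is immediate from the separating property of the~$x_n^*$). Denoting by $\mu_C$ the Minkowski functional of $C$, so that $C=\{\mu_C\leq 1\}$ since $C$ is closed, set $\psi:=\mu_C^2+\delta\,\phi$ and
\[
  K := \{x\in X : \psi(x)\leq 1\}
\]
for a parameter $\delta>0$ to be chosen.

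For the inclusions, on $(1-\varepsilon)C$ one has $\mu_C\leq 1-\varepsilon$ and $\phi\leq M:=\sup_C\phi<\infty$, so any $\delta$ with $\delta M\leq 1-(1-\varepsilon)^2$ yields $\psi\leq 1$, whence $(1-\varepsilon)C\subset K$; conversely $x\in K$ forces $\mu_C(x)^2\leq 1$, so $x\in C$. The set $K$ is bounded as a subset of the bounded $C$. Rotundity follows from strict convexity of $\psi$ (convex $\mu_C^2$ plus strictly convex $\delta\,\phi$): if a supporting hyperplane $H$ of $K$ at some $z\in K$ contained a second point $w\in H\cap K$, the midpoint $m=(z+w)/2$ would lie in $H$ by linearity while strict convexity would give $\psi(m)<\tfrac12(\psi(z)+\psi(w))\leq 1$, so continuity of $\psi$ would force $m$ into the interior of $K$, contradicting $m\in H$.

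The main technical obstacle is the continuity of $\mu_C$: this is automatic when $0$ is a topological interior point of $C$, which in finite dimensions is equivalent to being a non-support point but can fail in an infinite-dimensional separable Banach space. Where this gap needs to be handled, one first carries out the construction on the thickening $C+\eta\,\overline{B}_X(0,1)$, which has $0$ in its interior, and then intersects back with $C$ — with care taken that the trimming step does not destroy rotundity. This is the delicate point where the argument must adapt Klee's original construction to the weaker hypothesis used here.
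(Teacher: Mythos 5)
Your construction (sublevel set $K=\{\mu_C^2+\delta\phi\le 1\}$ of a strictly convexified Minkowski functional) is a genuinely different route from the paper's, which instead takes $K=T(C)$ for the radial shrinking map $T(x)=x/(1+a\|x\|_{sc})$ built from an equivalent strictly convex norm. The set-theoretic part of your argument is fine: $\psi$ is convex and lower semicontinuous, so $K$ is closed, convex, bounded, and the inclusions $(1-\varepsilon)C\subset K\subset C$ follow as you say. The problem is the rotundity step, and you have correctly located but not closed the gap. Your argument that $H\cap K$ is a singleton needs two things: continuity of $\mu_C$ (hence $0\in\operatorname{int}C$) and nonempty interior of $K$ (so that ``$m$ is an interior point'' contradicts ``$m$ lies on a supporting hyperplane''). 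In the case this proposition is actually used for --- Proposition~\ref{prop:ConvexStrConv}, where $C$ is compact in an infinite-dimensional separable space --- $C$ has empty interior, a non-support point is never an interior point, $\mu_C$ is not continuous, and $K$ has empty interior, so the argument as written proves nothing in the intended range of application. Moreover, the proposed repair is broken: if you run the construction on the thickening $C+\eta\overline{B}_X(0,1)$ and then intersect the resulting rotund set $K'$ with $C$, the set $K'\cap C$ inherits whatever flat boundary pieces $C$ has (in the extreme case $K'\supset C$ gives $K'\cap C=C$, which need not be rotund at all), so rotundity is genuinely destroyed and no amount of ``care'' in the trimming recovers it.

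The gap is closable, but only by actually using the non-support hypothesis the way the paper (following Klee) does, namely by propagating non-supportness rather than invoking topological interior. Concretely: for $z\ne w$ in $K$ the midpoint $m$ satisfies $\psi(m)<1$ by strict convexity of $\phi$; now if $H=\{\langle x^*,\cdot\rangle=r\}$ were a hyperplane supporting $K$ at $m$, the fact that $0$ is a non-support point of $C$ gives $y\in C$ with $\langle x^*,y\rangle>0$ (and another with $\langle x^*,y\rangle<0$), and the estimates $\mu_C(m+sy)\le\mu_C(m)+s$ together with the local Lipschitz property of $\phi$ show $\psi(m+sy)<1$ for small $s>0$, so $m+sy\in K$ lies strictly on the wrong side of $H$ --- a contradiction. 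Since any supporting hyperplane of $K$ containing two points contains their midpoint and supports $K$ there, $H\cap K$ must be a singleton. Without an argument of this kind (or the paper's equivalent statement that a point strictly between two points of $K$ can only be a support point if $0$ is), your proof is incomplete precisely where the hypothesis ``non-support point'' does real work.
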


The following proof is basically a more detailed version of the one of Lemma~1.4 in~\cite{Klee1959SmoothnessConvexity} and (1.2) of IV in~\cite{Klee1953Homeomorphisms}. Since this part is not proved in~\cite{Klee1959SmoothnessConvexity} we include it for the convenience of the reader.

\begin{proof}
  Without loss of generality, we assume that $0\in C$ and that it is a non-support point. Since $X$ is separable, by Theorem~1.6 in~\cite{Klee1959SmoothnessConvexity} there is an equivalent strictly convex norm $\|\cdot\|_{sc}$ on $X$. We set
  \[
    a := \frac{\varepsilon}{2(1-\varepsilon)\sup\{\|z\|_{sc}\colon z\in C\}}
  \]
  and define the mappings
  \[
    g\colon [0,\infty) \to \mathbb{R}, \qquad t \mapsto \frac{1}{1+at} \qquad\text{and}\qquad T\colon X\to X, \qquad x\mapsto g(\|x\|_{sc}) x.
  \]
  Note that $g(t)\leq 1$ and
  \begin{align*}
    g(tr+(1-t)s) &= \frac{1}{1+a(tr+(1-t)s)} = \frac{1}{t(1+ar)+(1-t)(1+as)} \\
                 &= \frac{g(r)g(s)}{tg(s)+(1-t)g(r)}
  \end{align*}
  for $r,s\geq 0$ and $t\in [0,1]$. We set $K:=T(C)$ and observe that $T(C)\subset C$ since $Tz$ is a convex combination of $0$ and $z$. Note that for $u \in C$ and $\lambda \in [0,1]$ we have $T(\lambda u) = \lambda g(\lambda\|u\|_{sc}) u$ and hence $[0,Tu]\subset K$. Given $u,v\in C$ we set $\alpha := g(\|u\|_{sc})$, $\beta := g(\|v\|_{sc})$ and let $z\in [Tu,Tv]=[\alpha u, \beta v]$, in other words, there is $s\in [0,1]$ with
  \[
    z = s\alpha u + (1-s)\beta v.
  \]
  We pick a $t\in [0,1]$, set 
  \[
    y = tu +(1-t)v, \qquad \gamma = \frac{\alpha\beta}{t\beta+(1-t)\alpha} \qquad\text{and note that}\qquad z = \gamma y.
  \]
  Since
  \begin{align*}
    g(\|y\|_{sc}) &= g (\|tu+(1-t)v\|_{sc}) > g(t\|u\|_{sc}+(1-t)\|v\|_{sc}) \\ &= \frac{g(\|u\|_{sc})g(\|v\|_{sc})}{tg(\|v\|_{sc}) + (1-t) g(\|u\|_{sc})} = \frac{\alpha\beta}{t\beta+(1-t)\alpha} = \gamma
  \end{align*}
  we have $z = \gamma y \in [0, Ty) \subset K$. In particular $K$ is a convex set. Note that $0$ is a non-support point of $C$ and hence for every hyperplane $H$ containing $0$ there are $y_1,y_2\in C$ which are on different sides of $H$. By construction, $K$ contains a positive multiple of both of these points and hence $0$ is a non-support point of~$K$. Since a point which is strictly between two points in~$K$ can only be a support point if the endpoints of this segment are support points and $0$ is a non-support point, we conclude that $z$ is a non-support point.

  In other words, we have shown that no non-trivial convex combination of points in $K$ can be a support point. Since for every supporting hyperplane $H$ the set $H\cap K$ is a convex set of support points of $K$ it has to be a singleton, i.e. $K$ is rotund.

  Since for $w\in C$ we have
  \[
    g(\|w\|_{sc}) = \frac{1}{1+a\|w\|_{sc}} \geq \frac{1}{1+\frac{\varepsilon}{2(1-\varepsilon)}} = \frac{1-\varepsilon}{1-\frac{\varepsilon}{2}}
  \]
  we obtain
  \[
    (1-\varepsilon) w = \frac{1-\varepsilon}{1-\frac{\varepsilon}{2}} \left(1-\frac{\varepsilon}{2}\right)w \in \left[0,T\left(\left(1-\frac\varepsilon2\right)w\right)\right] \subset K
  \]
  and hence $(1-\varepsilon) C \subset K \subset C$ as claimed.
\end{proof}

\begin{proposition}\label{prop:ConvexStrConv}
  Let $X$ be a separable Banach space and $C\subset X$ a bounded closed and convex set. For every $\varepsilon \in (0,1)$ there is a rotund set $K$ with $h(K,C)<\varepsilon$. If $C$ is compact, so is $K$.
\end{proposition}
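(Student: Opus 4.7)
The plan is to reduce to Proposition~\ref{prop:KleeStrConv}, which produces a rotund convex set sandwiched between $(1-\varepsilon')D$ and $D$ whenever $D$ is a bounded closed convex set with at least one non-support point. The obstacle is that a generic bounded closed convex $C$ may have no non-support point at all; for instance, a segment in an ambient space of dimension at least two has every point as a support point, via any hyperplane containing the segment. I would overcome this by enlarging $C$ to a slightly bigger bounded closed convex set $C_1$ that provably has a non-support point and is close to $C$ in Hausdorff distance; Proposition~\ref{prop:KleeStrConv} applied to $C_1$ then yields the desired~$K$.

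The key auxiliary step is to produce, for every $\delta > 0$, a compact rotund convex subset of $X$ of diameter at most $\delta$ with a designated non-support point. Since $X$ is separable, I would pick a sequence $(y_k)_{k\in\mathbb{N}}$ dense in $\overline{B}_X(0,1)$ and define
\[
  T \colon \ell^2 \to X, \qquad T(t) := \sum_{k=1}^{\infty} 2^{-k} t_k y_k,
\]
a compact linear operator (norm limit of its finite-rank truncations). The set $B_1 := T(\overline{B}_{\ell^2})$ is then compact and convex. Rotundity transfers from the Hilbert ball: for any nonzero $\phi \in X^*$, density of $(y_k)$ in the unit ball forces $T^*\phi = (2^{-k}\phi(y_k))_k$ to be nonzero in $\ell^2$, so $\phi$ attains its supremum over $B_1$ at the single point $T(T^*\phi/\|T^*\phi\|_{\ell^2})$. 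Moreover $\phi(0) = 0 < \|T^*\phi\|_{\ell^2} = \sup_{B_1} \phi$, so $0$ is a non-support point of $B_1$; scaling by $\delta$ yields a set of any required small diameter.

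Fix any $c_0 \in C$ and a small $\delta > 0$, set $B := c_0 + \delta B_1$, and let $C_1 := \overline{\operatorname{co}}(C \cup B)$. Every element of $\operatorname{co}(C \cup B)$ has the form $\lambda c + (1-\lambda) b$ with $c \in C$, $b \in B$, $\lambda \in [0,1]$; comparing with $\lambda c + (1-\lambda) c_0 \in C$ yields
\[
  d(\lambda c + (1-\lambda) b, C) \le (1-\lambda) \|b - c_0\| \le \delta \diam B_1,
\]
and hence $h(C_1, C) \le \delta \diam B_1$. Moreover, if a nonzero $\phi \in X^*$ supported $C_1$ at $c_0$, then since $c_0 \in B \subset C_1$ it would also support $B$ at $c_0$, contradicting that $c_0$ is a non-support point of $B = c_0 + \delta B_1$ (by translation of the previous paragraph); hence $c_0$ is a non-support point of $C_1$.

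Applying Proposition~\ref{prop:KleeStrConv} to the translate $C_1 - c_0$ (which has $0$ as a non-support point) with parameter $\varepsilon'$, I obtain a bounded rotund convex set $K_0$ with $(1-\varepsilon')(C_1 - c_0) \subset K_0 \subset C_1 - c_0$. Setting $K := K_0 + c_0$, translation invariance of rotundity gives that $K$ is rotund, and the inclusions yield $h(K, C_1) \le \varepsilon' \diam C_1$. Choosing $\delta$ so that $\delta \diam B_1 < \varepsilon/2$ and then $\varepsilon'$ so that $\varepsilon' \diam C_1 < \varepsilon/2$, I obtain $h(K, C) < \varepsilon$. For the compactness assertion, if $C$ is compact then $C \cup B$ is compact, so $C_1 = \overline{\operatorname{co}}(C \cup B)$ is compact by Mazur's theorem; and $K$ is the image of the compact set $C_1 - c_0$ under the continuous map constructed in the proof of Proposition~\ref{prop:KleeStrConv} (then translated by $c_0$), hence also compact.
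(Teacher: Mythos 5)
Your proof is correct and follows essentially the same route as the paper: both construct the compact convex set $T(\overline{B}_{\ell^2})$ via a dense sequence to manufacture a non-support point, and then invoke Proposition~\ref{prop:KleeStrConv}. The only difference is cosmetic: the paper attaches this auxiliary set by the Minkowski sum $C+\tfrac{\varepsilon}{2}T(B_{\ell_2})$ (which makes \emph{every} point of $C$ a non-support point of the enlarged set), whereas you take $\overline{\operatorname{co}}(C\cup(c_0+\delta B_1))$ and verify the single non-support point $c_0$, which is all that Proposition~\ref{prop:KleeStrConv} requires.
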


The following proof is based on a modification of an argument by V.~Klee used in the proof of Theorem~1.5 in~\cite[p.~56]{Klee1959SmoothnessConvexity}.

\begin{proof}
  We pick a dense sequence $(w_k)_{k\in\mathbb{N}}$ in the unit sphere of $X$ and consider the linear mapping
  \[
    T\colon \ell_2 \to X, \qquad (a_k)_{k\in\mathbb{N}} \mapsto \sum_{k=1}^{\infty} 2^{-k} a_k w_k
  \]
  whose restriction to the closed unit ball $B_{\ell_2}$ is weak-to-norm continuous. Hence, the set $\frac{\varepsilon}{2}T(B_{\ell_2})$ is a compact subset of $X$. We set $\tilde{K}:= C + \frac{\varepsilon}{2}T(B_{\ell_2})$ and observe that as the sum of two closed convex sets, one of them compact, it is a closed convex set and since $0\in T(B_{\ell_2})$, it contains $C$. Note that if $C$ is compact, so is $\tilde{K}$. By construction we also have $h(\tilde{K},C)<\frac{\varepsilon}{2}$. Since $0$ is a non-support point of $T(B_{\ell_2})$, every point of $C$ is a non-support point of $\tilde{K}$. Now the claim follows from Proposition~\ref{prop:KleeStrConv}.
\end{proof}

\begin{corollary}\label{cor:TypicalConvSetSepSpace}
  Let $X$ be a separable Banach space and $C\subset X$ a closed and convex subset. The set of rotund compact sets is a dense subset of the hyperspace of compact convex sets.
\end{corollary}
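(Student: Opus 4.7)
The corollary is essentially an immediate consequence of Proposition~\ref{prop:ConvexStrConv}. Given any $K$ in the hyperspace of compact convex sets and any $\varepsilon > 0$, the plan is to apply Proposition~\ref{prop:ConvexStrConv} directly to $K$ itself, letting $K$ play the role of the ambient set ``$C$'' appearing in that statement. Since $K$ is compact and hence bounded, closed and convex, the hypotheses are satisfied, and the proposition produces a rotund compact convex set $K_\varepsilon$ with $h(K_\varepsilon,K) < \varepsilon$. This alone already gives density of rotund compact convex sets in the hyperspace $\mathcal{CK}(X)$ of all compact convex subsets of~$X$.

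If the hyperspace in question is $\mathcal{CK}(C)$, so that we additionally require $K_\varepsilon \subset C$, I would precede the above by a small contraction step: fix any point $p \in K$ and replace $K$ by
\[
  K_\lambda := \lambda p + (1-\lambda)K.
\]
By convexity of $K$ we have $K_\lambda \subset K \subset C$, and $h(K,K_\lambda) \leq \lambda \diam K$, which can be made less than $\varepsilon/2$ by choosing $\lambda$ sufficiently small. Applying Proposition~\ref{prop:ConvexStrConv} to $K_\lambda$ with a parameter small enough that the Minkowski enlargement $K_\lambda + \frac{\varepsilon'}{2}T(B_{\ell_2})$ from the proof of that proposition still lies inside~$C$ then produces a rotund compact convex approximant of $K_\lambda$, hence of~$K$, contained in~$C$.

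The main point requiring attention in the second paragraph is precisely the containment of the Minkowski enlargement in~$C$: one needs a uniform positive gap between $K_\lambda$ and $X \setminus C$. I expect this to be handled by choosing $p$ in the (relative) interior of~$C$, placing $K_\lambda$ inside that interior for every $\lambda \in (0,1]$; compactness of $K_\lambda$ then supplies a uniform positive distance from the boundary of~$C$, so the enlargement can be absorbed. Once this is in hand the corollary follows immediately from the density argument of the first paragraph applied to $K_\lambda$ in place of~$K$.
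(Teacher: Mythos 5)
Your first paragraph is precisely the paper's (unwritten) proof: the corollary is presented as an immediate consequence of Proposition~\ref{prop:ConvexStrConv} applied to the compact convex set $K$ itself, and that paragraph alone proves the statement as literally written, i.e.\ density of rotund compact sets among compact convex sets with no containment in~$C$ asserted.

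The strengthening you attempt in the second and third paragraphs, producing an approximant inside~$C$, has a genuine gap. A closed convex subset of an infinite-dimensional Banach space may have empty relative interior --- for instance the Hilbert cube $\prod_{n}[-1/n,1/n]$ in $\ell_2$ has dense linear span, so its closed affine hull is all of $\ell_2$ while its interior is empty --- so the point $p$ you propose need not exist. Moreover, even when the relative interior is nonempty, a positive distance from $K_\lambda$ to the \emph{relative} boundary of $C$ only controls displacements within the affine hull of $C$, whereas the summand $\frac{\varepsilon'}{2}T(B_{\ell_2})$ necessarily contains vectors outside every proper closed subspace (its span must be dense for $0$ to be a non-support point of $T(B_{\ell_2})$ in $X$); hence $K_\lambda+\frac{\varepsilon'}{2}T(B_{\ell_2})$ escapes $C$ whenever $\operatorname{int}_X C=\emptyset$. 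What your argument actually requires is $K_\lambda\subset\operatorname{int}_X C$, i.e.\ that $C$ have nonempty interior in $X$. No repair is possible in general: for $C=[0,1]\times\{0\}\subset\mathbb{R}^2$ every nondegenerate compact convex subset of $C$ lies in the supporting line $\{y=0\}$ and meets it in more than one point, so under the paper's strict notion of rotundity the only rotund compact subsets of $C$ are singletons, which are not dense in $\mathcal{CK}(C)$. The corollary therefore has to be read as density in the hyperspace of compact convex subsets of $X$ (or under an additional assumption on $C$), and for that reading your first paragraph already suffices.
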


\begin{remark}
  In~\cite{Klee1959SmoothnessConvexity}, V.~Klee shows that the set of convex compact sets which are $X^*$-rotund and $X^*$-smooth is the complement of a meagre set. Let us mention a view differences between these results.
  \begin{enumerate}
  \item Note that our result slightly differs from Klee's result, since our definition of rotundity is more restrictive than Klee's since we do require the intersection of every supporting hyperplane with the convex set to be a singleton and do not allow the case where this intersection is the whole set. In the latter case uniqueness of the metric projection is no longer guaranteed.
  \item The difference between our definition of rotund sets and V.~Klee's definition of $X^*$-rotund sets also explains why we need $X$ to be separable and Klee does not:
    \begin{enumerate}
    \item Every compact set is separable and so is its closed linear span. For this reason if $X$ is non-separable, every compact set $C$ is contained in a hyperplane and hence also in one of its supporting hyperplanes. This implies that the only rotund compact subsets of a non-separable Banach space are the singletons. For this reason Corollary~\ref{cor:TypicalConvSetSepSpace} fails in the non-separable space. Since, in the language of this paper, $X^*$-rotund sets may be subsets of supporting hyperplanes,
      the corresponding result by V.~Klee also holds in this case. 
    \item Every non-separable Banach space $X$ which is not rotund, contains a compact subset $K$ which is rotund in an infinite-dimensional subspace in which it is contained but fails to have unique metric projections: Since $X$ is not rotund, we may pick $x,y\in S_X$ with $[x,y]\subset S_X$. Using the Hahn-Banach Theorem we obtain a functional $x^*\in B_{X^*}$ with $\langle x^*, \frac{1}{2}(x+y)\rangle = 1$. A direct computation shows that this functional satisfies $\langle x^*,x\rangle = \langle x^*,y\rangle =1$. We now choose a separable infinite dimensional subspace $Y$ of $\ker x^*$ which contains $y-x$. Using the construction in the proof of Proposition~\ref{prop:ConvexStrConv}, we obtain an infinite dimensional compact convex set $\tilde{K}\subset Y$ containing $[0,y-x]$ which, as a subset of $Y$, is rotund. We finally set $K:=x+\tilde{K}$, $H:=\{z\in X^*\colon \langle x^*,z\rangle = 1\}$\linebreak and observe that $K\subset H$. Since $\|z\|\geq \langle x^*, z\rangle =1 $ for all $z\in H$ and hence in particular for all $z\in K$, we observe that both $x$ and $y$ are points in $K$ minimising the distance to the origin. Hence, $K$ fails to have unique projections.
    \end{enumerate}    
  \end{enumerate}
\end{remark}

Another direct consequence of Proposition~\ref{prop:ConvexStrConv} is the following useful variant where we require a fixed element of $C$ to be contained in the rotund set nearby.

\begin{corollary}\label{cor:RotundContainingPoint}
  Let $C$ be a convex compact set, $\varepsilon>0$, and $z\in C$. There is a rotund compact set $K$ with $z\in K$ and $h(C,K)<\varepsilon$.
\end{corollary}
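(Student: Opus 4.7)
The plan is to revisit the construction in the proofs of Propositions~\ref{prop:KleeStrConv} and~\ref{prop:ConvexStrConv} and notice that Klee's rotundification procedure has the origin of the ambient space as a distinguished fixed point. By translating so that $z$ becomes the origin, the resulting rotund compact set automatically contains $z$. Concretely, I would first replace $C$ by $\tilde C := C-z$, so that $0\in\tilde C$, and perform the construction there before translating back.

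Next I would fix a small parameter $\eta>0$ and form, as in the proof of Proposition~\ref{prop:ConvexStrConv}, the compact convex set
\[
\tilde K := \tilde C + \tfrac{\eta}{2}\, T(B_{\ell_2}),
\]
where $T\colon \ell_2\to X$ is the compact linear map built from a dense sequence in the unit sphere. By construction $h(\tilde K,\tilde C)\leq \eta/2$, and the argument given there shows that every point of $\tilde C$, in particular $0$, is a non-support point of $\tilde K$. I would then apply Proposition~\ref{prop:KleeStrConv} to $\tilde K$ using $0$ as the distinguished non-support basepoint; this is legitimate because the only property of the basepoint invoked in that proof is that it is a non-support point of the set being rotundified. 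The key observation is that the map $x\mapsto g(\|x\|_{sc})x$ used in Klee's argument fixes the origin (since $g(0)\cdot 0=0$), and therefore the resulting rotund compact set $K'$ satisfies $0\in K'$ and $(1-\eta')\tilde K\subset K'\subset\tilde K$ with $\eta'>0$ at our disposal.

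Finally, setting $K := K' + z$ gives a rotund compact set containing $z$. To control the distance I would use the triangle inequality
\[
h(K,C)=h(K',\tilde C)\leq h(K',\tilde K)+h(\tilde K,\tilde C)
\]
together with the inclusion $(1-\eta')\tilde K\subset K'\subset\tilde K$, which yields $h(K',\tilde K)\leq \eta' M$ for $M:=\sup\{\|w\|\colon w\in\tilde K\}<\infty$. Choosing $\eta<\varepsilon$ and $\eta'<\varepsilon/(2(M+1))$ small enough then forces $h(K,C)<\varepsilon$.

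The only real obstacle is a bookkeeping one: making sure that imposing the particular choice of basepoint in Klee's construction does not disturb any step of his argument and that the two error parameters $\eta,\eta'$ can be made simultaneously small while keeping the norms of points in $\tilde K$ uniformly bounded. Since $\tilde K$ is compact for every admissible $\eta$, the bound $M$ is finite, so choosing the parameters is routine.
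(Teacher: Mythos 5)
Your argument is correct: the map $x\mapsto g(\|x\|_{sc})x$ in the proof of Proposition~\ref{prop:KleeStrConv} does fix the origin, the only property of the basepoint used there is that it is a non-support point of the set being rotundified (which $0$ is for $\tilde K=\tilde C+\frac{\eta}{2}T(B_{\ell_2})$, by the argument in Proposition~\ref{prop:ConvexStrConv}), and the estimate $h(K',\tilde K)\leq \eta' M$ follows from $(1-\eta')\tilde K\subset K'\subset\tilde K$ exactly as you say, so the parameters can indeed be tuned to give $h(K,C)<\varepsilon$. However, the corollary is intended as a direct consequence of the \emph{statement} of Proposition~\ref{prop:ConvexStrConv}, and there is a much shorter route that never reopens Klee's construction: take any rotund compact set $K_1$ with $h(K_1,C)<\varepsilon/2$, pick $w\in K_1$ with $\|z-w\|<\varepsilon/2$ (possible since $d(z,K_1)\leq h(C,K_1)<\varepsilon/2$), and set $K:=K_1+(z-w)$. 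Translations preserve rotundity and compactness, $z=w+(z-w)\in K$, and $h(K,C)\leq h(K,K_1)+h(K_1,C)\leq\|z-w\|+\varepsilon/2<\varepsilon$. Your version buys a little more --- the rotund set you build contains a large homothet of $C$ centred at $z$, not merely the point $z$ --- but at the cost of having to certify that fixing the basepoint does not disturb any step of Klee's argument; for the corollary as stated the translation trick suffices.
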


\begin{lemma}\label{lem:diamPC}
  Let $X$ be a separable Banach space and $C\subset X$ a closed and convex set. For each $n\in\mathbb{N}$ the set $\mathcal{L}_n$ of compact convex subsets $K\subset C$ with the property, that there are points $x_n,y_n\in K$ with $\|x_n-y_n\|\geq 1/n$ and a norm one functional attaining its maximum both in $x_n$ and $y_n$, is closed and nowhere dense.
\end{lemma}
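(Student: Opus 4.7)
The plan is to verify closedness and nowhere-denseness separately, with closedness carrying almost all of the technical work and nowhere-denseness following immediately from the genericity of rotund sets.

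For closedness, I would take a sequence $(K_m)_{m\in\mathbb{N}}\subset\mathcal{L}_n$ converging to some $K\in\mathcal{CK}(C)$ in Hausdorff distance and, for each $m$, pick witnessing data: points $x_m,y_m\in K_m$ with $\|x_m-y_m\|\geq 1/n$ and a norm-one functional $f_m\in X^*$ satisfying $f_m(x_m)=f_m(y_m)=\max_{K_m}f_m$. Lemma~\ref{lem:CompSeqUnion} makes the union $K\cup\bigcup_m K_m$ compact, so a first subsequence extraction gives $x_m\to x$ and $y_m\to y$ with $x,y\in K$ and $\|x-y\|\geq 1/n$. Since $X$ is separable, the closed unit ball $\overline{B}_{X^*}$ is weak-$*$ metrizable, so Banach--Alaoglu supplies a further subsequence along which $f_m\to f$ weak-$*$ for some $f\in\overline{B}_{X^*}$. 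The estimate $|f_m(x_m)-f_m(x)|\leq\|x_m-x\|$ combined with weak-$*$ convergence then yields $f_m(x_m)\to f(x)$, and similarly $f_m(y_m)\to f(y)$, so $f(x)=f(y)$; moreover, for an arbitrary $z\in K$ picking $z_m\in K_m$ with $z_m\to z$ (possible by Hausdorff convergence) gives $f(z)=\lim f_m(z_m)\leq\lim f_m(x_m)=f(x)$, hence $f(x)=f(y)=\sup_K f$. Whenever $\|f\|>0$, the normalization $g:=f/\|f\|$ is then a norm-one functional attaining its maximum on $K$ at both $x$ and $y$, placing $K$ in $\mathcal{L}_n$.

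The main obstacle I anticipate is the degenerate case where the weak-$*$ limit $f$ vanishes, which genuinely can occur in infinite dimensions because the norm-one sphere of $X^*$ is not weak-$*$ closed. My strategy in that scenario would be to track instead a Hausdorff limit $F$ of the exposed faces $\{z\in K_m\colon f_m(z)=\max_{K_m}f_m\}$, kept in the compact universe supplied by Lemma~\ref{lem:CompSeqUnion}; this $F$ is a convex compact subset of $K$ of diameter at least $1/n$, and the task reduces to producing, by a separate convex-analytic argument rather than a naive limit of the $f_m$, a norm-one functional exposing $F$ on $K$.

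Once closedness is established, nowhere-denseness is immediate from Corollary~\ref{cor:TypicalConvSetSepSpace}: the rotund compact convex subsets of $C$ are dense in $\mathcal{CK}(C)$, and by the definition of rotundity every exposed face of a rotund set is a singleton, so no rotund set can contain two points at distance $\geq 1/n$ both attaining the maximum of a common norm-one functional. Hence the complement of $\mathcal{L}_n$ contains this dense class, and combined with closedness this forces $\mathcal{L}_n$ to have empty interior, completing the claim.
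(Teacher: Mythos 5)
Your first paragraph is sound and is the natural way to begin: Lemma~\ref{lem:CompSeqUnion} yields norm-convergent subsequences $x_m\to x$, $y_m\to y$ in $K$ with $\|x-y\|\geq 1/n$, separability makes $\overline{B}_{X^*}$ weak-$*$ metrizable so that $f_m\to f$ weak-$*$ along a further subsequence, and your limit computations correctly show that $f$ attains its maximum over $K$ at both $x$ and $y$, so that $K\in\mathcal{L}_n$ whenever $f\neq 0$. Since the paper only refers to the proof of Theorem~2.2 in~\cite{Klee1959SmoothnessConvexity} and gives no details, the comparison has to be with what is actually needed to close the argument, and that is exactly where your proposal stops.

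The degenerate case $f=0$, which you flag but leave unresolved, is not a removable technicality, and neither of your two strategies survives it. Take $X=\ell_2$, $C=2\overline{B}_{\ell_2}$, $K=\{x\in\ell_2\colon \sum_k k^2x_k^2\leq 1\}$ (a rotund compact convex set: every nonzero functional attains its maximum over $K$ at a unique point), and for $m\geq 2$ put $p_m=\frac{1}{m}e_m+\frac{1}{2}e_1$, $q_m=\frac{1}{m}e_m-\frac{1}{2}e_1$ and $K_m=\operatorname{conv}(K\cup\{p_m,q_m\})$. The norm-one functional $e_m^*$ attains its maximum $\frac{1}{m}$ over $K_m$ at both $p_m$ and $q_m$, and $\|p_m-q_m\|=1$, so $K_m\in\mathcal{L}_1$, while $h(K_m,K)\leq \frac{1}{m}\to 0$ and $K\notin\mathcal{L}_1$. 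Here $e_m^*\to 0$ weak-$*$, so the normalization step is unavailable; and the fallback via faces also fails, because the exposed faces $[q_m,p_m]$ converge to the segment $\left[-\frac{1}{2}e_1,\frac{1}{2}e_1\right]$, which contains the non-support point $0$ of $K$ and hence is contained in no supporting hyperplane of $K$, so no convex-analytic argument can produce the desired functional from this limit face. The example shows that the two-case structure of your proof cannot be completed for the family $\mathcal{L}_n$ exactly as written: Klee's actual argument works with a finer countable decomposition in which the witnessing functional is additionally required to have oscillation $\sup_K f-\inf_K f$ bounded below by a second parameter, which is precisely what forces the weak-$*$ limit to be nonzero and makes the closedness argument go through; any correct write-up has to build in a non-degeneracy device of this kind. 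Since your nowhere-density argument is deduced from closedness together with Corollary~\ref{cor:TypicalConvSetSepSpace}, it inherits the same gap.
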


\begin{proof}
  This can be shown by repeating the proof of Theorem~2.2 in~\cite{Klee1959SmoothnessConvexity}.
\end{proof}

\begin{remark}
  Note that given $K\not\in\mathcal{L}_n$ and $x\in X$ the diameter of the set
  \[
    \{z\in K\colon \|x-z\|=d(x,K)\}
  \]
  is at most $1/n$ since otherwise we may pick $y,z\in K$ with $d(x,K)=\|x-y\|=\|x-z\|$ and $\|y-z\| > 1/n$. By Proposition~\ref{prop:HyperplaneAndProjection} there is a functional $x^*\in B_{X^*}$ with \[
    \langle x^*, \frac{1}{2}(y+z)\rangle \geq \langle x^*, w\rangle
  \]
  for all $w\in C$. A direct computation shows that $\langle x^*,y\rangle = \langle x^*,z\rangle = \langle x^*, (y+z)/2\rangle$ which contradicts $K\not\in\mathcal{L}_n$.
\end{remark}

As a conclusion, combining Proposition \ref{prop:ConvexStrConv} and Lemma \ref{lem:diamPC}, we have proved the following theorem, which plays a key role in the proof of the main result as it allows in our setting to always pass on to a rotund set which consequently ensures the projections to be unique.

\begin{theorem}\label{thm:rotundResidual}
  Let $X$ be a separable Banach space and $C\subset X$ a closed and convex set. The set of rotund compact subsets of $C$ is a residual subset of the hyperspace $\mathcal{CK}(C)$. In particular, the set of convex compact sets having a continuous (single-valued) metric projection is a residual subset of the hyperspace $\mathcal{CK}(C)$.
\end{theorem}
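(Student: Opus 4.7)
The plan is to express the complement of the set of rotund compact convex subsets as the countable union $\bigcup_{n\in\mathbb{N}} \mathcal{L}_n$ from Lemma~\ref{lem:diamPC}, from which meagreness and hence residuality follow immediately, while the ``in particular'' clause is a direct consequence of Proposition~\ref{prop:StrConProjUnique} and the continuity proposition following it.

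First I would verify the set-theoretic identity $\mathcal{CK}(C)\setminus\mathcal{R} = \bigcup_{n\in\mathbb{N}}\mathcal{L}_n$, where $\mathcal{R}$ denotes the rotund elements of $\mathcal{CK}(C)$. If $K\in\mathcal{CK}(C)$ is not rotund, then some supporting hyperplane $H=\{x:\langle x^*,x\rangle=r\}$ meets $K$ in at least two distinct points $y,z$; after normalising $x^*$ to have unit norm, and flipping its sign in the case $\inf_{x\in K}\langle x^*,x\rangle\geq r$, we obtain a norm-one functional whose maximum over $K$ is attained at both $y$ and $z$. Since $\|y-z\|>0$, we conclude $K\in\mathcal{L}_n$ for all sufficiently large $n$. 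The reverse containment is transparent: if a norm-one functional $x^*$ attains its maximum on $K$ at two distinct points $x_n,y_n$, then the hyperplane $\{z:\langle x^*,z\rangle=\max_{w\in K}\langle x^*,w\rangle\}$ supports $K$ at both, so $K$ is not rotund.

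With this identity in hand, Lemma~\ref{lem:diamPC} realises $\bigcup_n\mathcal{L}_n$ as a countable union of closed nowhere dense sets, hence a meagre subset of $(\mathcal{CK}(C),h)$; therefore $\mathcal{R}$ is residual. Density of $\mathcal{R}$ is recorded separately in Proposition~\ref{prop:ConvexStrConv}, and is in any case automatic from the Baire category theorem applied in the complete metric space $(\mathcal{CK}(C),h)$.

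For the second assertion, Proposition~\ref{prop:StrConProjUnique} guarantees that every $K\in\mathcal{R}$ admits a single-valued metric projection, and the continuity proposition proved just after it shows that this projection is continuous. Hence the set of compact convex subsets of~$C$ with continuous single-valued metric projection contains the residual set $\mathcal{R}$ and is residual itself. The only real care needed is in bookkeeping the equivalence between non-rotundity and membership in some $\mathcal{L}_n$; once that identification is in place, nothing else beyond the two cited results and Baire's theorem is required.
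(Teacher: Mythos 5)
Your proposal is correct and follows essentially the same route as the paper, which obtains the theorem by combining Proposition~\ref{prop:ConvexStrConv} with Lemma~\ref{lem:diamPC}; your explicit verification that the non-rotund sets are exactly $\bigcup_{n}\mathcal{L}_n$ is precisely the (unstated) bookkeeping the paper relies on. The appeal to Proposition~\ref{prop:StrConProjUnique} and the subsequent continuity proposition for the ``in particular'' clause also matches the paper's intent.
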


In addition, we obtain as a direct consequence of Lemma~\ref{lem:diamPC} and Proposition~\ref{prop:contMetrProjSaP} the following corollary that we frequently use in the last section.

\begin{corollary}\label{cor:ContinuityAtRotundSets}
  Let $C\subset X$ be a rotund compact set and $x\in X$. For every $\varepsilon>0$ and every $n\in\mathbb{N}$ there is $\delta>0$ such that for every compact convex set $K$ with $h(K,C)<\delta$ and every $y\in X$ with $\|x-y\|<\delta$ we have
  \[
    h(\{z\in K\colon \|y-z\|=d(y,K)\}, \{P_C x\}) < \varepsilon
  \]
  and the diameter of the set $\{z\in K\colon \|y-z\|=d(y,K)\}$ is at most $1/n$.
\end{corollary}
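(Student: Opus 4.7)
The plan is to deduce both conclusions by combining the two cited tools: the closedness of $\mathcal{L}_n$ from Lemma~\ref{lem:diamPC} and the Hausdorff convergence of near-projections in the remark following Proposition~\ref{prop:contMetrProjSaP}. The key preparatory observation is that the rotundity of $C$ forces $C\notin\mathcal{L}_n$ for every $n\in\mathbb{N}$: if there were $y,z\in C$ with $\|y-z\|\ge 1/n$ and a norm-one functional $x^*\in X^*$ satisfying $\langle x^*,y\rangle=\langle x^*,z\rangle=\sup_{w\in C}\langle x^*,w\rangle=:r$, then the hyperplane $\{u\in X\colon \langle x^*,u\rangle=r\}$ would support $C$ and meet it in at least two distinct points, contradicting rotundity.

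For the diameter bound, since Lemma~\ref{lem:diamPC} gives that $\mathcal{L}_n$ is closed in the Hausdorff topology and $C\notin \mathcal{L}_n$ by the observation above, I pick $\delta_1>0$ such that every convex compact $K$ with $h(K,C)<\delta_1$ satisfies $K\notin \mathcal{L}_n$. The remark directly following Lemma~\ref{lem:diamPC} then yields, for every such $K$ and every $y\in X$, that the set $\{z\in K\colon \|y-z\|=d(y,K)\}$ has diameter at most $1/n$.

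For the Hausdorff-distance inequality I argue by contradiction. If no $\delta_2>0$ worked, then for each $k\in\mathbb{N}$ I could find a convex compact $K_k$ with $h(K_k,C)<1/k$ and a point $y_k$ with $\|x-y_k\|<1/k$ for which
\[
h\bigl(\{z\in K_k\colon \|y_k-z\|=d(y_k,K_k)\},\{P_Cx\}\bigr)\ge \varepsilon.
\]
But then $K_k\to C$ in Hausdorff distance to the rotund compact set $C$ and $y_k\to x$, so the remark following Proposition~\ref{prop:contMetrProjSaP} (applied with the notation $\tilde{P}_{K_k}y_k$) gives $h(\tilde{P}_{K_k}y_k,\{P_Cx\})\to 0$, a direct contradiction. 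Taking $\delta:=\min\{\delta_1,\delta_2\}$ completes the proof.

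The only step that does any real work is the opening observation that rotundity of $C$ puts it outside every closed set $\mathcal{L}_n$; once this is granted, both halves of the statement reduce to straightforward invocations of previously established results, and there is no additional analytic difficulty to overcome.
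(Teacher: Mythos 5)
Your proposal is correct and follows exactly the route the paper intends: the paper gives no written proof, stating only that the corollary is a direct consequence of Lemma~\ref{lem:diamPC} and Proposition~\ref{prop:contMetrProjSaP}, and you supply precisely the missing glue --- the observation that rotundity forces $C\notin\mathcal{L}_n$, the openness of the complement of the closed set $\mathcal{L}_n$ for the diameter bound, and the sequential compactness/contradiction argument via the remark after Proposition~\ref{prop:contMetrProjSaP} for the Hausdorff estimate.
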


\section{Perturbation of compact-convex-valued mappings}
The aim of this section is to show that we are able to change the value of a set-valued mapping at a given point to a set nearby without increasing the Lipschitz constant too much. For this aim we need the following lemma from~\cite[Lemma~3.1]{Dymond2021} or~\cite[Lemma~5.3]{BRT2022}, cf. also Lemma~4.2 in~\cite{Medjic2022}.

\begin{lemma}\label{lem:CleverLemma}
  Let $X$ be a Banach space, $C\subset X$ a convex set containing the origin and $0<r<R$. There is a mapping $\Phi\colon C\to C$ such that
  \begin{enumerate}
  \item $\Phi(x)=0$ for all $x\in C\cap \overline{B}(0,r)$.
  \item $\Phi(x)=x$ for all $x\in C \setminus B(0,R)$.
  \item $\lip\Phi \leq 1 + \frac{r}{R-r}$.
  \item $\|\Phi(x)-x\|\leq r$ for all $x\in C$.
  \end{enumerate}
\end{lemma}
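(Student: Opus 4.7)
The plan is to construct $\Phi$ explicitly as a radial mapping. Define $\phi\colon[0,\infty)\to[0,\infty)$ by
\[
\phi(t)=\begin{cases} 0 & \text{if } 0\le t\le r,\\ \dfrac{R(t-r)}{R-r} & \text{if } r\le t\le R,\\ t & \text{if } t\ge R,\end{cases}
\]
and set $\Phi(x):=\phi(\|x\|)\,x/\|x\|$ for $x\neq 0$ and $\Phi(0):=0$. Since each $\Phi(x)$ is a convex combination of $0$ and $x$, and both lie in $C$ by hypothesis, $\Phi$ maps $C$ into $C$. Properties (1) and (2) follow directly from the respective cases of $\phi$. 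For (4), observe that $\Phi(x)$ and $x$ lie on the same ray through the origin, so $\|\Phi(x)-x\|=|\phi(\|x\|)-\|x\||$; the bound $r$ is then verified on each of the three intervals by direct computation, the maximum being attained at $\|x\|=r$ in the middle piece.

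The substantial work is the Lipschitz bound (3). My plan is to apply Lemma~\ref{lem: Lipboundinandoutofball} twice: first with the convex set $C\cap\overline{B}(0,R)$ inside $C$ (on its complement $\Phi$ is the identity, hence $1$-Lipschitz, and $1\le R/(R-r)$), and then inside $C\cap\overline{B}(0,R)$ with the convex set $C\cap\overline{B}(0,r)$ (on which $\Phi\equiv 0$). This reduces the task to bounding $\lip\Phi$ on the annular region $\{x\in C: r<\|x\|\le R\}$, where
\[
\Phi(x)=\frac{R}{R-r}\,\psi(x),\qquad \psi(x):=x-\frac{r\,x}{\|x\|}.
\]
Hence (3) reduces to showing that the radial shrinkage $\psi$ is nonexpansive on this annulus.

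The main technical obstacle is proving $\lip\psi\le 1$ in an arbitrary Banach space, i.e.
\[
\|(x-y)-r\bigl(x/\|x\|-y/\|y\|\bigr)\|\le\|x-y\|\qquad\text{for }\|x\|,\|y\|\ge r.
\]
I plan to verify this by parameterising the segment $t\mapsto(1-t)x+ty$ and tracking the one-sided directional derivatives of $t\mapsto\psi((1-t)x+ty)$; a short case analysis then reduces the pointwise bound to the elementary fact that the scalar soft-threshold $s\mapsto (s-r)^{+}$ is $1$-Lipschitz on $[0,\infty)$ together with a tangential contraction estimate (the direction $x/\|x\|$ changes by a vector lying in a hyperplane on which the restriction of $\psi$ is itself contracting). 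Once this is established, multiplying by $R/(R-r)$ and combining via Lemma~\ref{lem: Lipboundinandoutofball} yields $\lip\Phi\le R/(R-r)=1+r/(R-r)$, completing the verification of all four properties.
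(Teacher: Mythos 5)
The paper itself gives no proof of this lemma---it imports it verbatim from \cite{Dymond2021} (Lemma~3.1) and \cite{BRT2022} (Lemma~5.3)---so there is no in-paper argument to compare against; your radial construction is of the same type as the one used in those sources. Your definition of $\Phi$, the verifications of properties \textit{1.}, \textit{2.} and \textit{4.}, and the reduction of \textit{3.} to the single claim that $\psi(x)=x-rx/\|x\|$ is nonexpansive on $\{x\colon\|x\|\ge r\}$ via a double application of Lemma~\ref{lem: Lipboundinandoutofball} are all correct, and $R/(R-r)=1+r/(R-r)$ is indeed the right target constant.

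The one genuine weakness is the sketch you offer for that last claim, which is where all the content of the lemma sits. As written it has two problems. First, the segment $t\mapsto(1-t)x+ty$ joining two points of the annulus may pass through $B(0,r)$, where $\psi$ is not defined, so ``tracking one-sided derivatives along the segment'' only makes sense after you extend $\psi$ by $0$ to the inner ball (i.e.\ replace $\|x\|-r$ by its positive part) and also control the derivative on the sphere $\|x\|=r$. Second, the parenthetical ``tangential contraction estimate'' is not an argument: in a general Banach space radial maps are genuinely dangerous (the radial projection onto a ball can have Lipschitz constant as large as $2$), so nonexpansiveness of $\psi$ cannot be taken as geometrically evident. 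Fortunately the claim is true and admits a two-line proof that avoids derivatives entirely: for $\|x\|,\|y\|\ge r$ one has
\[
\psi(y)-\psi(x)=(y-x)-r\Bigl(\frac{y}{\|y\|}-\frac{x}{\|x\|}\Bigr)=\Bigl(1-\frac{r}{\|y\|}\Bigr)(y-x)+\frac{r\,(\|y\|-\|x\|)}{\|x\|\,\|y\|}\,x,
\]
and hence
\[
\|\psi(y)-\psi(x)\|\le\Bigl(1-\frac{r}{\|y\|}\Bigr)\|y-x\|+\frac{r}{\|y\|}\,\bigl|\,\|y\|-\|x\|\,\bigr|\le\|y-x\|,
\]
where the sign of the coefficient $1-r/\|y\|$ is nonnegative precisely because $\|y\|\ge r$. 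Substituting this computation for your sketch closes the gap and yields $\lip\Phi\le R/(R-r)$ as required.
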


Together with Proposition~\ref{prop:ConvexLip} this lemma allows us to replace the value of a set-valued mapping at some point to compact and convex set close to the original value while having enough control over the Lipschitz constant of the resulting map.

\begin{lemma}\label{lem:PerturbationLemma}
  Let $F\in\mathcal{M}$, $C\subset X$ a closed convex set with $\xi\in C$, $0<\rho<r<R$ and $K$ a compact convex set satisfying $h(K,F(\xi))<\rho$. There is a mapping $G\colon C\to C$ with the following properties:
  \begin{enumerate}
  \item $G(\xi) = K$.
  \item $G(x)=F(x)$ for $x\in C\setminus B(\xi,R)$.
  \item $\lip G \leq \max\{\frac{R}{R-r}\lip F, \lip F + \frac{\rho}{r}\}$.
  \item $h(F(x),G(x))\leq 2r$ for all $x\in C$.
  \end{enumerate}
\end{lemma}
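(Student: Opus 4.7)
The plan is to construct $G$ as a convex combination of $K$ with a composition $F\circ\tilde\Phi$, where $\tilde\Phi$ is a cut-off that contracts a ball around $\xi$ to the point $\xi$ itself. Applying Lemma~\ref{lem:CleverLemma} to the translated convex set $C-\xi$ and setting $\tilde\Phi(x):=\Phi(x-\xi)+\xi$, I obtain $\tilde\Phi\colon C\to C$ satisfying $\tilde\Phi(x)=\xi$ for $x\in\overline{B}(\xi,r)$, $\tilde\Phi(x)=x$ for $x\notin B(\xi,R)$, $\|\tilde\Phi(x)-x\|\leq r$, and $\lip\tilde\Phi\leq R/(R-r)$. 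Consequently $\tilde F:=F\circ\tilde\Phi$ is constantly equal to $F(\xi)$ on $\overline{B}(\xi,r)$, agrees with $F$ off $B(\xi,R)$, and satisfies $\lip\tilde F\leq\tfrac{R}{R-r}\lip F$.

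I then introduce the cone-shaped cut-off $\lambda(x):=\max\{0,1-\|x-\xi\|/r\}$, which has $\lambda(\xi)=1$, vanishes outside $B(\xi,r)$, and satisfies $\lip\lambda\leq 1/r$, and define
\[
G(x):=\lambda(x)\,K+(1-\lambda(x))\,\tilde F(x).
\]
Properties (1) and (2) are immediate. For (4) a short case split suffices: on $\overline{B}(\xi,r)$ one uses Lemma~\ref{lem:Enpoints} with the triangle inequality and $\lip F\leq 1$ to get $h(F(x),G(x))\leq h(F(x),F(\xi))+\lambda(x)h(F(\xi),K)<r+\rho<2r$, while outside this ball $G(x)=\tilde F(x)$ and $h(F(x),\tilde F(x))\leq\lip F\cdot r\leq r$.

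The main obstacle is the Lipschitz bound (3); I would prove it by splitting $C\times C$ according to whether $x,y$ lie inside $\overline{B}(\xi,r)$. If both do, then $\tilde F(x)=\tilde F(y)=F(\xi)$ and Lemma~\ref{lem:CBGeodesic} yields $h(G(x),G(y))=|\lambda(x)-\lambda(y)|\,h(K,F(\xi))\leq(\rho/r)\|x-y\|$. If neither does, then $\lambda$ vanishes at both, so $G=\tilde F$ on $\{x,y\}$ and $h(G(x),G(y))\leq\tfrac{R}{R-r}\lip F\|x-y\|$. In the mixed case I select $z\in[x,y]$ with $\|z-\xi\|=r$; at such $z$ one has $\lambda(z)=0$ and $\tilde\Phi(z)=\xi$, so $G(z)=F(\xi)$, and applying the two prior estimates on the subsegments $[x,z]$ and $[z,y]$ together with $\|x-z\|+\|z-y\|=\|x-y\|$ gives
\[
h(G(x),G(y))\leq\max\!\bigl\{\tfrac{\rho}{r},\tfrac{R}{R-r}\lip F\bigr\}\|x-y\|,
\]
which is bounded above by the claimed $\max\{\tfrac{R}{R-r}\lip F,\lip F+\rho/r\}$ since $\lip F\geq 0$. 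The delicate point is really the mixed case, where one must verify that the chosen $z$ lies simultaneously on the ``inner'' part of the segment (where the $\lambda$-estimate governs) and on the ``outer'' part (where $\tilde F$ governs); the choice $\|z-\xi\|=r$ achieves exactly this, and the nonexpansivity of the $\oplus$-geodesic encoded in Lemma~\ref{lem:CBGeodesic} then closes the argument.
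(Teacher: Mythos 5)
Your construction of $G$ is identical to the paper's, namely $G(x)=\lambda(x)K+(1-\lambda(x))F(\xi+\Phi(x-\xi))$ with the same cone-shaped cut-off $\lambda$ and the same $\Phi$ from Lemma~\ref{lem:CleverLemma}, and your verification of all four properties is correct. The only differences are cosmetic: you glue the two Lipschitz estimates by hand via an intermediate point $z$ on the segment with $\|z-\xi\|=r$, which is exactly the content of Lemma~\ref{lem: Lipboundinandoutofball} that the paper cites instead, and on $\overline{B}(\xi,r)$ you exploit that $F\circ\tilde\Phi$ is constant there to obtain the slightly sharper constant $\rho/r$ from Lemma~\ref{lem:CBGeodesic}, whereas the paper settles for $\lip F+\rho/r$ via Proposition~\ref{prop:ConvexLip}; both bounds are dominated by the claimed maximum.
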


\begin{proof}
We start by defining a Lipschitz mapping
  \[
    \lambda\colon C \to [0,1], \qquad x\mapsto \max\left\{1-\frac{\|x-\xi\|}{r},0\right\}
  \]
  and choosing the mapping $\Phi$ from Lemma~\ref{lem:CleverLemma} corresponding to $r$ and $R$. Using these two mappings we define $G\colon C\to C$ by
  \[
    G(x) := \lambda(x) K + (1-\lambda(x)) F(\xi+\Phi(x-\xi)).
  \]
  In the rest of the proof we show that $G$ satisfies the conditions \textit{1}.-\textit{4}.. Obviously \textit{1.} holds. Furthermore, observe that $G(x)=F(x)$ for $x\in C\setminus B(\xi,R)$ since in this case we have $\lambda(x)=0$ and $\xi+\Phi(x-\xi)=x$, hence \textit{2.} holds.
  
  For $x,y\in B(\xi,r)$ we conclude from Proposition~\ref{prop:ConvexLip} that
  \[
    h(G(x),G(y)) \leq (\lip F + h(K, F(\xi))\lip \lambda) \|x-y\| \leq (\lip F + \rho/r) \|x-y\|.
  \]
  For $x,y\in C\setminus B(\xi,r)$ we have
  \begin{align*}
    h(G(x), G(y)) &= h(F(\xi+\Phi(x-\xi)), F(\xi+\Phi(y-\xi))) \leq \lip F\lip \Phi \|x-y\|\\
    &\leq \frac{R}{R-r}\lip F \|x-y\|
  \end{align*}
  by Lemma~\ref{lem:CleverLemma}. Bearing in mind Lemma \ref{lem: Lipboundinandoutofball} we see that taking the maximum of these bounds implies the claimed bound on the Lipschitz constant of~$G$, i.e. we have shown \textit{3.}.
  
As a last step we prove \textit{4}.. By Lemma~\ref{lem:Enpoints} we obtain
  \[
    h(F(x),G(x)) \leq h(F(x),F(\xi)) + h(F(\xi), \lambda(x)K+(1-\lambda(x))F(\xi)) \leq r + \rho \leq 2r
  \]
  for all $x\in C\cap B(\xi,r)$. For $x\in C\setminus B(\xi,r)$ Lemma~\ref{lem:CleverLemma} implies that
  \[
    h(F(x),G(x)) = h(F(x), F(\xi+\Phi(x-\xi))) \leq r \lip F  < 2r
  \]
  which finishes the proof.
\end{proof}

\section{Successive Approximations for compact convex valued mappings}
The goal of the last section is to prove the main result of this paper, more precisely, to show that for the typical nonexpansive mapping with convex and compact point images and typical points of its domain the sequence of successive approximations is unique and converges to a fixed point of the mapping.

Let us first fix a few necessary notions. For a compact set $M$ we denote by
\[
  \tilde{P}_M x := \{z\in M\colon \|x-z\|=d(x,M)\}
\]
the set-valued metric projection onto $M$.

\begin{definition} 
	Let $X$ be a Banach space, $C\subset X$ a closed and bounded subset, \linebreak $F:C\to \mathcal{K}(C)$ and $N\in \mathbb{N}$. A \textit{trajectory with respect to} $F$ or \textit{a sequence of successive approximations with respect to} $F$ is a sequence $(x_i)_{i=1}^N $ in $C$ with
	\[ 
	x_{n+1}\in \tilde P_{F(x_n)} x_n, \quad n=1,\ldots, N-1.
	\]
\end{definition}
\begin{definition}
	An infinite sequence of successive approximations or trajectory $(x_n)_{n=0}^\infty$ is called \emph{regular}, if $P_{F(x_n)} x_n$ is a singleton for all $n$.
\end{definition}
\begin{theorem}\label{thm:MainResult1}
  Let $X$ be a separable Banach space, $C\subset X$ a bounded closed and convex set, and let $x_0\in C$. There is a set $\mathcal{A}(x_0)\subset\mathcal{M}$ with meagre complement with the property that for every $F\in\mathcal{A}(x_0)$ the sequence $x_{n+1} :=P_{F(x_n)} x_n$ is well defined, i.e. unique, and converges to a fixed point of~$F$.
\end{theorem}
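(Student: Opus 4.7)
The plan is to exhibit $\mathcal{A}(x_0)$ as a countable intersection $\bigcap_{n\in\mathbb{N}}\mathcal{O}_n$ of open dense subsets of $\mathcal{M}$. I let $\mathcal{O}_n$ consist of those $F\in\mathcal{M}$ for which there exist $N\geq n$ and a trajectory $(y_0,y_1,\ldots,y_N)$ of $F$ with $y_0=x_0$ satisfying $\diam\tilde{P}_{F(y_i)}(y_i)<1/n$ for $0\leq i<N$ and $d(y_N,F(y_N))<1/n$. Granting openness (more precisely, that the density construction below lands in the interior of $\mathcal{O}_n$) and density, Baire category yields the residual $\mathcal{A}(x_0)$. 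For $F\in\bigcap_n\mathcal{O}_n$, the theorem follows by induction: the $i=0$ condition applied across all $n$ forces $\diam\tilde{P}_{F(x_0)}(x_0)=0$, so $x_1:=P_{F(x_0)}(x_0)$ is uniquely defined; since the second term of any witness trajectory of length at least two must then coincide with $x_1$, the $i=1$ condition for every $n\geq 2$ forces $\diam\tilde{P}_{F(x_1)}(x_1)=0$, and so on inductively. The monotonicity $d(x_{k+1},F(x_{k+1}))\leq h(F(x_k),F(x_{k+1}))\leq\|x_k-x_{k+1}\|=d(x_k,F(x_k))$ combined with $d(x_{N_n},F(x_{N_n}))<1/n$ for $N_n\geq n$ gives $d(x_k,F(x_k))\to 0$; Cauchyness of $(x_k)$, and hence convergence to a fixed point of $F$, is inherited from the strict contraction used in the density step.

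For density, given $F\in\mathcal{M}$ and $\varepsilon>0$, Proposition~\ref{prop: NdenseinM} produces a strict contraction $F_1\in\mathcal{M}$ with $d_\infty(F,F_1)<\varepsilon/2$ and $c_1:=\lip F_1<1$. For any trajectory $(y_i)_{i\geq 0}$ of $F_1$ starting at $x_0$, the bounds $d(y_{i+1},F_1(y_{i+1}))\leq c_1 d(y_i,F_1(y_i))$ and $\|y_i-y_{i+1}\|=d(y_i,F_1(y_i))$ give geometric decay and Cauchyness. Fix $c\in(c_1,1)$ and $N\geq n$ with $c^N\diam C<1/n$, and build $G$ inductively on $i=0,1,\ldots,N$: starting from $\tilde y_0=x_0$ and $G:=F_1$, at step $i$ use Corollary~\ref{cor:RotundContainingPoint} to select a rotund compact convex $K_i$ with $h(K_i,G(\tilde y_i))<\rho_i$, apply Lemma~\ref{lem:PerturbationLemma} inside a small ball $B(\tilde y_i,R_i)$ to modify $G$ so that $G(\tilde y_i)=K_i$, and set $\tilde y_{i+1}:=P_{K_i}(\tilde y_i)$ (a singleton by rotundity and Proposition~\ref{prop:StrConProjUnique}). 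Choose $R_i<\tfrac12\min_{j<i}\|\tilde y_i-\tilde y_j\|$ (so the perturbation balls are pairwise disjoint and earlier $K_j$ stay untouched), together with $r_i\ll R_i$ and $\rho_i\ll r_i$ satisfying $\sum_i\rho_i/r_i<c-c_1$; Lemma~\ref{lem:PerturbationLemma} together with Lemma~\ref{lem: Lipboundinandoutofball} then keeps $\lip G<c<1$ globally, while smallness of the $r_i$ keeps $d_\infty(F_1,G)<\varepsilon/2$. Each $G(\tilde y_i)=K_i$ being rotund yields $\diam\tilde{P}_{G(\tilde y_i)}(\tilde y_i)=0<1/n$ for $i<N$, and the contraction estimate gives $d(\tilde y_N,G(\tilde y_N))\leq c^N\diam C<1/n$, placing $G\in\mathcal{O}_n$ with $d_\infty(F,G)<\varepsilon$.

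For openness at the constructed $G$, rotundity of each $K_i=G(\tilde y_i)$ lets me apply Corollary~\ref{cor:ContinuityAtRotundSets}: there is a $\delta>0$ such that for every $H\in\mathcal{M}$ with $d_\infty(H,G)<\delta$ and every $y'\in C$ with $\|y'-\tilde y_i\|<\delta$, the set $\tilde{P}_{H(y')}(y')$ has diameter strictly less than $1/n$ and lies within an arbitrarily small prescribed neighborhood of $\{\tilde y_{i+1}\}$. Iterating along $i=0,1,\ldots,N-1$ and using nonexpansiveness of $H$ to propagate $\|y'_i-\tilde y_i\|<\delta$, we obtain a trajectory $(x_0,y'_1,\ldots,y'_N)$ of $H$ satisfying the diameter bounds, while continuity in Hausdorff distance preserves $d(y'_N,H(y'_N))<1/n$. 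Hence a $d_\infty$-neighborhood of $G$ lies inside $\mathcal{O}_n$ and $G$ belongs to the interior of $\mathcal{O}_n$; density of this interior follows.

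The principal obstacle is the simultaneous coordination of parameters in the density step: the radii $R_i$ must be small enough to keep the perturbation balls pairwise disjoint (so that later applications of Lemma~\ref{lem:PerturbationLemma} do not alter earlier rotund values $K_j$), the ratios $r_i/R_i$ and $\rho_i/r_i$ must be chosen so that the cumulative Lipschitz increments keep $\lip G$ strictly below $1$ across all $N+1$ perturbations, and the perturbed iterates $\tilde y_i$ must track the unperturbed $F_1$-iterates closely enough for the terminal distance estimate to persist. A secondary subtlety is upgrading $d(x_k,F(x_k))\to 0$ to Cauchyness of $(x_k)$ for a generic $F\in\bigcap_n\mathcal{O}_n$; this exploits the fact that $F$ lies in the $d_\infty$-closure of the strict contractions produced above together with the monotonic decay of $d(x_k,F(x_k))$ along the trajectory.
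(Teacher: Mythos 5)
Your overall architecture matches the paper's: write $\mathcal{A}(x_0)$ as a countable intersection of sets whose complements are nowhere dense, obtain density by first passing to a strict contraction (Proposition~\ref{prop: NdenseinM}) and then replacing the values along a trajectory by nearby rotund sets via Lemma~\ref{lem:PerturbationLemma}, and obtain stability of the small-diameter condition from Corollary~\ref{cor:ContinuityAtRotundSets}. Your existential formulation of $\mathcal{O}_n$ (versus the paper's universal quantification over all trajectories starting in a ball $B(x_0,r)$) still yields uniqueness of the trajectory by the induction you describe. However, there is one genuine gap: the convergence of $(x_k)$ to a fixed point. Your argument correctly shows that $d(x_k,F(x_k))$ is non-increasing and tends to $0$, but the assertion that Cauchyness of $(x_k)$ ``is inherited from the strict contraction used in the density step'' is not a proof. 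By Proposition~\ref{prop: NdenseinM} \emph{every} $F\in\mathcal{M}$ lies in the $d_\infty$-closure of the strict contractions, so this property carries no information about $F$; and $\|x_{k+1}-x_k\|=d(x_k,F(x_k))\to 0$ does not imply that $(x_k)$ is Cauchy when $C$ is a non-compact bounded set in an infinite-dimensional space (harmonic-type step sizes are not excluded). The paper closes exactly this step by invoking Theorems~4.2--4.4 of~\cite{RZ2002}; without that citation or an additional trapping condition built into $\mathcal{O}_n$ (forcing the whole tail of the trajectory into balls of radius $1/n$), your argument proves well-definedness and $d(x_k,F(x_k))\to 0$ but not convergence.

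A secondary, repairable weakness is the openness/stability step. Corollary~\ref{cor:ContinuityAtRotundSets} gives, for each index $i$ and each prescribed output accuracy, a tolerance $\delta_i$; but to iterate along the trajectory the output accuracy at step $i$ must be at most the input tolerance at step $i+1$, so these tolerances must be chosen in a \emph{backward} cascade from $i=N-1$ down to $i=0$. A single uniform $\delta$ ``with arbitrarily small prescribed neighborhoods'' does not exist a priori. This backward coordination is precisely the content of the paper's second induction (Lemma~\ref{lem:IndStep}) and is the technical heart of the proof; your forward construction of $G$ is fine, but it must be followed by this backward pass. You should also treat the degenerate case where the trajectory hits a fixed point, since then $\min_{j<i}\|\tilde y_i-\tilde y_j\|=0$ and your choice of $R_i$ breaks down; the paper handles this by stopping the perturbation at the first fixed point and using Corollary~\ref{cor:RotundContainingPoint} to keep that point inside the rotund replacement.
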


We start with preliminary work, and combine the preliminary steps into the proof of the theorem later.
Let $C$ be a bounded closed and convex subset of a separable Banach space $X$ and $x_0\in C$. (We fix $X$, $C$ and $x_0$ for the whole section.) 
Define
\begin{multline*}
  \mathcal{A}_n(x_0) := \Big\{F\in\mathcal{M}\colon \exists r >0 \;\text{s.t.}\; \diam \tilde{P}_{F(v_k)}v_k \leq \frac{1}{n} \; \text{for all}\;v_0,\ldots,v_n\;\text{with}\;\\ v_0\in B(x_0,r), v_{k+1}\in\tilde{P}_{F(v_k)} v_k\Big\}.
\end{multline*}
Our initial aim is to show that each $\mathcal{M}\setminus \mathcal{A}_n(x_0)$ is a nowhere dense set. In order to do so, we follow the upcoming scheme:

\begin{itemize}
\item[I] First induction (moving forward) to obtain the sequence $(y_n)\subset C$.
\item[II] Second induction (moving backwards) to obtain sequences $(z_n)\subset C$ and $(G_n)\subset \mathcal{M}.$
\item[III]  Proof that $\mathcal{M}\setminus \mathcal{A}_n(x_0)$ is nowhere dense.
\end{itemize}

\subsection*{I Induction: moving forward}
\subsubsection*{I Induction base}
Let $F\colon C\to \mathcal{CK}(C)$ be a nonexpansive mapping and $\varepsilon>0$. Since by Proposition \ref{prop: NdenseinM} the set of strict contractions is a dense subset of $\mathcal{M}$, we may assume without loss of generality that $\lip F < 1$. More precisely, we may choose a strict contraction whose distance to $F$ is smaller than $\varepsilon/2$ and then proceed with $\varepsilon/2$ instead of $\varepsilon$.

We now define a mapping $\tilde{F}$ which is close to $F$ and a sequence $(y_n)_{n\in\mathbb{N}}$ in $C$ with $y_{n+1}\in \tilde{P}_{\tilde{F}(y_n)} y_n$.
We start by setting $y_0 := x_0$ and need to do an additional step. The importance of this step will become clear at the end of the construction and serves the purpose of anchoring the sequence of $(z_k)_{k=1}^{n}$ which will be constructed later. If $F(x_0)$ is rotund, we set $\tilde{F}=F$. Otherwise, we deduce from Theorem~\ref{thm:rotundResidual} that there is a compact rotund set $K_0$ with $h(F(x_0), K_0)<\rho_0$ where $\rho_0\in (0, \frac{\varepsilon}{4}(1-\lip F))$. If $x_0\in F(x_0)$, by Corollary~\ref{cor:RotundContainingPoint} we may assume that $x_0\in K_0$.
These considerations allow us to choose a large enough $R_0>0$ to apply Lemma~\ref{lem:PerturbationLemma} and obtain a nonexpansive mapping $\tilde{F}$ with $\tilde{F}(x_0)=K_0$, $\lip\tilde{F}<1$ and $d_\infty(F,\tilde{F})<\frac{\varepsilon}{2}$. We set $y_1 := P_{\tilde{F}(y_0)} y_0$ and proceed inductively.

\subsubsection*{I Induction step}
Assume that all elements up to $y_n$ have been defined already. If $y_n\in \tilde{F}(y_n)$, we set $y_{n+1}=y_n$. Otherwise we observe that again by Theorem~\ref{thm:rotundResidual} there is a sequence of compact rotund sets $(K_k^n)_{k\in\mathbb{N}}$ with $K_k^n\to \tilde{F}(y_n)$ for $k\to\infty$ with respect to the Hausdorff distance. Since the sequence $P_{K_k^n} y_n$ is contained in a compact set we may, by passing to subsequences if necessary, assume without loss of generality, that the limit
\[
  y_{n+1} := \lim_{k\to\infty} P_{K_k^n} y_n
\]
exists.

\subsection*{II Induction: moving backwards}
\subsubsection*{II Induction base}
Using this sequence, for fixed $n\in\mathbb{N}$ we construct a mapping $G\in \mathcal{A}_n(x_0)$ with distance to $\tilde{F}$ at most~$\varepsilon$. Observe that since
\[
  \|y_{k+1}-y_k\|=d(y_k,\tilde{F}(y_k))\leq h(\tilde{F}(y_{k-1}),\tilde{F}(y_k)) \leq \lip \tilde{F} \|y_k-y_{k-1}\|
\]
and $\lip \tilde{F} < 1$, all $y_k$ are different as long as none of them is a fixed point, see also Lemma~4.1 in~\cite{Medjic2022}. If we hit a fixed point, the sequence remains constant after that. Hence 
\[
  c := \min\{\|y_i-y_j\|\colon 0\leq i,j\leq k, k=\max\{m\colon y_m\neq y_n\}\}>0.
\]
We pick $0 < \rho_n < r_n < R_n < \frac{c}{2}$ with
\begin{equation}
  r_n< \frac{\varepsilon}{8n}, \qquad \lip \tilde{F} \frac{R_n}{R_n-r_n} < 1 \qquad \text{and}\qquad \lip \tilde{F} + \frac{\rho_n}{r_n} < 1.
\end{equation}
We choose $k$ large enough so that $h(K_k^{n},\tilde{F}(y_{n})) < \rho_n$ and denote by $G_n$ the mapping we obtain from Lemma~\ref{lem:PerturbationLemma} using the above parameters. Note that the above assumptions imply that $G_n$ is a strict contraction and $G_n(y_n)$ is rotund. Moreover, we did not change the values on previous points of the sequence, i.e. $G_n(y_k)=\tilde{F}(y_k)$ for all $0\leq k < n$.

Now Corollary~\ref{cor:ContinuityAtRotundSets} allows us to choose $\delta_n>0$ such that $\diam\tilde{P}_{K} x \leq \frac{1}{n}$ for every convex compact set~$K$ with $h(K,G_n(y_n))<\delta_n$ and every $x\in C\cap B(y_n,\delta_n)$. 

We set $z_n:=y_n$, pick an arbitrary $z_{n+1}\in G_n(z_n)$ and set $\delta_{n+1}:=\frac{1}{n}$. The only role of the parameters $z_{n+1}$ and $\delta_{n+1}$ is to facilitate the phrasing of Lemma~\ref{lem:IndStep} and they do not have a meaning beyond that. Now we are able to proceed inductively.

\subsubsection*{II Induction step}
\begin{lemma}\label{lem:IndStep}
  Let $1 < k \leq n$, $z_{k},\ldots,z_n\in C$, $G_k\in \mathcal{M}$ and $\delta_n> \ldots > \delta_k>0$ with the following properties be given.
  \begin{enumerate}[(i)]
  \item $\lip G_k < 1$.
  \item $d_\infty(\tilde{F},G_k)<\frac{\varepsilon(n+1-k)}{4n}$.
  \item $\|y_k-z_k\|< \delta_k/4$.
  \item For $k\leq m\leq n$ and for every compact convex set~$K$ with $h(K,G_k(z_m))<\delta_m$ and every $x\in C\cap B(z_m,\delta_m)$, we have $\diam \tilde{P}_{K} x \leq \frac{1}{n}$ and $\tilde{P}_{K} x \subset C\cap B(z_{m+1},\frac{\delta_{m+1}}{2})$.
  \item $G_k(x)=\tilde{F}(x)$ for all $x\in C\setminus \bigcup_{m=k}^{n} B(y_m,c/2)$.
  \end{enumerate}
  Then there is a mapping $G_{k-1}$, a point $z_{k-1}$ and $\delta_{k-1}>0$ such that
  \begin{enumerate}[(1)]
  \item $\lip G_{k-1} < 1$.
  \item $d_\infty(\tilde{F},G_{k-1})<\frac{\varepsilon(n+1-(k-1))}{4n}$.
  \item $\|y_{k-1}-z_{k-1}\|< \delta_{k-1}/4$.    
  \item For $k-1\leq m\leq n$ every compact convex set~$K$ with $h(K,G_{k-1}(z_m))<\delta_{m}$ and every $x\in C\cap B(z_m,\delta_{m})$ satisfy $\diam \tilde{P}_{K} x \leq \frac{1}{n}$ and $\tilde{P}_{K} x \subset B(z_{m+1},\frac{\delta_{m+1}}{2})$.
  \item $G_{k-1}(x)=\tilde{F}(x)$ for all $x\in C\setminus \bigcup_{m=k-1}^{n} B(y_m,c/2)$.    
  \end{enumerate}  
\end{lemma}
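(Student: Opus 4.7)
The plan is to construct $G_{k-1}$ by perturbing $G_k$ only inside a small ball around $y_{k-1}$, so that (a) its value at $y_{k-1}$ becomes a rotund compact set close to $\tilde{F}(y_{k-1})$, and (b) the (now unique) projection of $y_{k-1}$ onto that set lies inside $B(z_k,\delta_k/4)$. Setting $z_{k-1}:=y_{k-1}$ and invoking Corollary~\ref{cor:ContinuityAtRotundSets} will then supply $\delta_{k-1}$. Choosing the perturbation radius strictly less than $c/2$ guarantees that the perturbation ball is disjoint from each $B(y_m,c/2)$ for $m\geq k$, so that $G_{k-1}=G_k$ on a neighbourhood of every $z_m$ with $m\geq k$ and property (iv) at those indices survives unchanged.

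Concretely, the target set is chosen as follows. By the forward construction, $K_j^{k-1}\to\tilde{F}(y_{k-1})$ in Hausdorff distance and $P_{K_j^{k-1}}y_{k-1}\to y_k$; combined with hypothesis (iii), namely $\|y_k-z_k\|<\delta_k/4$, we may choose $j$ so large that the rotund compact set $K:=K_j^{k-1}$ satisfies both $h(K,\tilde{F}(y_{k-1}))<\rho$ for a prescribed $\rho>0$ and $P_Ky_{k-1}\in B(z_k,\delta_k/8)$. Next we pick parameters $0<\rho<r<R<c/2$ with $r<\varepsilon/(8n)$ such that $\tfrac{R}{R-r}\lip G_k<1$ and $\lip G_k+\rho/r<1$; this is possible because $\lip G_k<1$. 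Applying Lemma~\ref{lem:PerturbationLemma} to $G_k$ centred at $y_{k-1}$ yields a nonexpansive $G_{k-1}$ with $G_{k-1}(y_{k-1})=K$, $G_{k-1}\equiv G_k$ outside $B(y_{k-1},R)$, $d_\infty(G_k,G_{k-1})\leq 2r<\varepsilon/(4n)$ and $\lip G_{k-1}<1$. Finally, Corollary~\ref{cor:ContinuityAtRotundSets} applied at the rotund set $K$ and the point $y_{k-1}$ furnishes $\delta_{k-1}\in(0,\delta_k/8)$ so that projections from points in $B(y_{k-1},\delta_{k-1})$ onto compact convex sets that are $\delta_{k-1}$-close to $K$ have diameter at most $1/n$ and lie within $\delta_k/8$ of $P_Ky_{k-1}$, hence inside $B(z_k,\delta_k/4)\subset B(z_k,\delta_k/2)$.

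Verification of (1)--(5) is then bookkeeping: (1) comes from the Lipschitz estimate in Lemma~\ref{lem:PerturbationLemma} together with the parameter inequalities above; (2) is the triangle inequality $d_\infty(\tilde{F},G_{k-1})\leq d_\infty(\tilde{F},G_k)+d_\infty(G_k,G_{k-1})$ combined with the inductive bound and the newly gained $\varepsilon/(4n)$; (3) is immediate from $z_{k-1}=y_{k-1}$; and (5) holds because $R<c/2$, so every $x\in C\setminus B(y_{k-1},c/2)$ lies outside $B(y_{k-1},R)$ and $G_{k-1}(x)=G_k(x)=\tilde{F}(x)$ by the inductive hypothesis. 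For (4), the case $m=k-1$ is exactly what the preceding paragraph arranged via Corollary~\ref{cor:ContinuityAtRotundSets}; for $m\geq k$ the inductive hypothesis transfers, once we observe that $\|z_m-y_{k-1}\|\geq \|y_m-y_{k-1}\|-\|y_m-z_m\|\geq c-\delta_m/4>c/2>R$, so $G_{k-1}$ and $G_k$ coincide on a neighbourhood of each $z_m$.

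The main obstacle is arranging property (iv) at the new index $m=k-1$: the projection from $z_{k-1}$ must land not just somewhere in $\tilde{F}(y_{k-1})$ but specifically inside the prescribed ball $B(z_k,\delta_k/2)$. This is precisely what the forward-induction limit $y_k=\lim_j P_{K_j^{k-1}}y_{k-1}$ was engineered to enable, in combination with the inductive proximity $\|y_k-z_k\|<\delta_k/4$ given by (iii). All the remaining technical constraints (rotundity of $K$, Lipschitz bounds, perturbation radius below $c/2$) exist only to support this single core requirement without disturbing the structure already established at $m\geq k$.
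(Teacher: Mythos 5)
Your construction is essentially the paper's own: the same parameter choice $0<\rho<r<R<c/2$ with $r<\varepsilon/(8n)$, the same use of the sets $K_j^{k-1}$ from the forward induction, the same application of Lemma~\ref{lem:PerturbationLemma} centred at $z_{k-1}$ (the paper takes $z_{k-1}$ to be an arbitrary point of a tiny ball around $y_{k-1}$; you take $y_{k-1}$ itself, which changes nothing essential), and the same invocation of Corollary~\ref{cor:ContinuityAtRotundSets} to produce $\delta_{k-1}$. Two small points. First, to feed Lemma~\ref{lem:PerturbationLemma} you need $h(K,G_k(y_{k-1}))<\rho$, not merely $h(K,\tilde{F}(y_{k-1}))<\rho$; this is fine because $\|y_{k-1}-y_m\|\geq c$ for $m\geq k$, so hypothesis~(v) gives $G_k(y_{k-1})=\tilde{F}(y_{k-1})$, but it should be said. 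Second, and more substantively, the claim that $j$ can be chosen so large that $P_{K_j^{k-1}}y_{k-1}\in B(z_k,\delta_k/8)$ is false in general: since $P_{K_j^{k-1}}y_{k-1}\to y_k$ and hypothesis~(iii) only guarantees $\|y_k-z_k\|<\delta_k/4$, the best available bound is $\|P_{K_j^{k-1}}y_{k-1}-z_k\|<\delta_k/8+\delta_k/4=3\delta_k/8$, and this cannot be improved when $z_k$ lies at distance close to $\delta_k/4$ from $y_k$. The argument survives because you only need the projections to land in $B(z_k,\delta_k/2)$: choose $j$ with $\|P_{K_j^{k-1}}y_{k-1}-y_k\|<\delta_k/8$ and use the three-term estimate
\[
  h(\tilde{P}_{K}x,\{z_k\})\leq h(\tilde{P}_{K}x,\{P_{K_j^{k-1}}y_{k-1}\})+\|P_{K_j^{k-1}}y_{k-1}-y_k\|+\|y_k-z_k\|<\tfrac{\delta_k}{8}+\tfrac{\delta_k}{8}+\tfrac{\delta_k}{4}=\tfrac{\delta_k}{2},
\]
which is exactly inequality~$(c)$ in the paper's proof. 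With that repair your verification of (1)--(5) goes through as written.
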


\begin{proof}
  Let $1< k \leq n$, $z_{k},\ldots,z_n\in C$, $G_k\in \mathcal{M}$ and $\delta_n> \ldots > \delta_k>0$ satisfy the conditions \textit{(i)}-\textit{(v)}. We show that \textit{(1)}-\textit{(5)} hold. 
  For that aim pick
  \[
    0 < \rho_{k-1} < r_{k-1} < R_{k-1} < \frac{c}{2}
  \]
  with
  \[
    r_{k-1}< \frac{\varepsilon}{8n}, \qquad \lip G_{k} \frac{R_{k-1}}{R_{k-1}-r_{k-1}} < 1 \qquad \text{and}\qquad \lip  G_{k} + \frac{\rho_{k-1}}{r_{k-1}} < 1.
  \]  
  By definition of the sequence $(y_m)_{m\in\mathbb{N}}$ there is a $j\in\mathbb{N}$ with
  \[
    \|y_{k}-P_{K_j^{k-1}} y_{k-1}\| < \frac{\delta_k}{8}\quad\text{and}\quad h\left(\tilde{F}(y_{k-1}),K_j^{k-1}\right)<\frac{\rho_{k-1}}{2}
  \]
  By Corollary~\ref{cor:ContinuityAtRotundSets} there is $s \in (0,\min\{\rho_{k-1},\delta_k\})$ such that
 \begin{equation}\tag{a}
    \diam \tilde{P}_{K} y \leq \frac{1}{n} \qquad\text{and}\qquad h(\tilde{P}_K y,\{P_{K_j^{k-1}} y_{k-1}\})<\delta_k/8
 \end{equation} 
  for all $y\in B(y_{k-1}, s)$ and all compact convex sets $K$ with $h(K,K_{j}^{k-1})<s$.
  We pick a point $z_{k-1} \in B(y_{k-1},s/8)$, set $\delta_{k-1}:=s/2$ and note that $\|y_{k-1}-z_{k-1}\| < \delta_{k-1}/4$ which means that \textit{(3)} is satisfied and in particular implies
  \[
    \|y_m-z_{k-1}\| \geq \|y_m-y_{k-1}\|-\|y_{k-1}-z_{k-1}\| \geq c - \frac{\rho_{k-1}}{2} \geq \frac{c}{2}
  \]
  for $m\geq k$ and hence $G_{k}(z_{k-1})=\tilde{F}(z_{k-1})$. Moreover, we have  
\begin{equation}\tag{b}
h(\tilde{F}(z_{k-1}),K_j^{k-1})\leq h(\tilde{F}(z_{k-1}), \tilde{F}(y_{k-1})) + h(\tilde{F}(y_{k-1}), K_j^{k-1})< \frac{s}{8} + \frac{\rho_{k-1}}{2} < \rho_{k-1}
\end{equation}
  and
  \begin{align*}\tag{c}
    h(\tilde{P}_K y,\{z_k\}) &\leq h(\tilde{P}_K y,\{P_{K_j^{k-1}} y_{k-1}\}) +\|P_{K_j^{k-1}} y_{k-1}-y_k\| + \|y_k-z_k\| \\ & <  \frac{\delta_k}{8} + \frac{\delta_k}{8} + \frac{\delta_k}{4} = \frac{\delta_k}{2}.
  \end{align*}
  Due to our choice of the parameters $\rho_{k-1},r_{k-1}, R_{k-1}, z_{k-1},$ and $K_j^{k-1}$ and inequality~$(b)$, we obtain a strict contraction $G_{k-1}$ from Lemma~\ref{lem:PerturbationLemma} which satisfies $G_{k-1}(z_{k-1})=K_{j}^{k-1}$, $G_{k-1}(x)=G_k(x)$ for all $x\in C\setminus B(z_{k-1},R_{k-1})$, $\lip G_{k-1}<1$, and
  \[
    d_\infty(G_{k-1},\tilde{F}) \leq d_\infty(G_{k-1},G_{k})+d_\infty(G_k,\tilde{F}) \leq \frac{\varepsilon}{4n} + \frac{\varepsilon(n+1-k)}{4n} = \frac{\varepsilon(n+1-(k-1))}{4n}.
  \]
  Note that conditions \textit{(1)} and \textit{(2)} are now also satisfied. To see \textit{(5)}, note that since $G_k(x)=\tilde{F}(x)$ for an even larger set by \textit{(v)}, we also have this identity for all \linebreak $x\in C\setminus \bigcup_{m=k-1}^{n} B(y_m,c/2).$
  
  To end the proof, we confirm \textit{(4)}. Given a compact convex set $K$ with \linebreak $h(K,G_{k-1}(z_{k-1}))<\delta_{k-1}$ and a point $x\in C \cap B(z_{k-1}, \delta_{k-1})$ we have \linebreak $\|x-y_{k-1}\| < \|x-z_{k-1}\| + \frac{\delta_{k-1}}{4} \leq 2 \delta_{k-1} = s$ and hence, as $G_{k-1}(z_{k-1})=K_{j}^{k-1}$ the choice of $s$ above ensures $(a)$ and $(c),$ therefore~\textit{(4)} is satisfied for $m=k-1$. Since $G_{k-1}(z_{m})=G_{k}(z_{m})$, \textit{(4)} is also satisfied for $m\geq k$ since in that range \textit{(iv)} holds. This completes the proof.
\end{proof}

\subsubsection*{III $\mathcal{M}\setminus \mathcal{A}_n(x_0)$ is nowhere dense}
Relying on the previous construction established in I and II, we now show the following.

\begin{proposition}\label{prop:closemapping}
  Given $F\in\mathcal{M}$, $x_0\in C,$ and $\varepsilon>0$ there is a strict contraction $G\in \mathcal{M}$, $r>0$ and $\delta>0$ with the following properties.
  \begin{enumerate}
  \item $d_\infty(F,G)<\varepsilon$
  \item $\diam \tilde{P}_{H(v_k)} v_k\leq \frac{1}{n}$ for all $v_0,\ldots,v_n$ with $v_0\in B(x_0,r)$, $v_{k+1}\in\tilde{P}_{H(v_k)} v_k$ and every $H\in\mathcal{M}$ with $h(H,G)<\delta$.
  \end{enumerate}
  In particular, $B_{\mathcal{M}}(G,\delta) \subset \mathcal{A}_{n}(x_0) \cap B_\mathcal{M}(F,\varepsilon)$, i.e. $\mathcal{M}\setminus \mathcal{A}_n(x_0)$ is nowhere dense.
\end{proposition}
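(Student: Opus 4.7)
The plan is to combine constructions~I and~II with one extra continuity step at the initial point. First, I would run construction~I: use Proposition~\ref{prop: NdenseinM} to reduce to the case $\lip F<1$, then perturb at $x_0$ via Lemma~\ref{lem:PerturbationLemma}, Theorem~\ref{thm:rotundResidual}, and Corollary~\ref{cor:RotundContainingPoint} to obtain $\tilde F\in\mathcal M$ with $\lip\tilde F<1$, $d_\infty(F,\tilde F)<\varepsilon/2$, and $\tilde F(x_0)=K_0$ rotund, together with the forward sequence $(y_k)_{k=0}^{n}$.

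Next, the induction base of~II produces $G_n\in\mathcal M$ with $\lip G_n<1$, $d_\infty(\tilde F,G_n)<\varepsilon/(4n)$, $G_n(y_n)$ rotund, $z_n:=y_n$ and $\delta_n>0$ so that condition~(iv) of Lemma~\ref{lem:IndStep} holds for $m=n$. Iterating Lemma~\ref{lem:IndStep} from $k=n$ down to $k=2$ yields $G_1\in\mathcal M$, points $z_1,\dots,z_n$, and radii $\delta_n>\dots>\delta_1>0$ satisfying (1)--(5) at level $k=1$; in particular $\lip G_1<1$ and $d_\infty(\tilde F,G_1)<\varepsilon n/(4n)=\varepsilon/4$, so $d_\infty(F,G_1)<\varepsilon$. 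I would set $G:=G_1$. Since $\|x_0-y_m\|\ge c>c/2$ for $m\ge 1$, property~(5) gives $G(x_0)=\tilde F(x_0)=K_0$, and $P_{K_0}x_0=y_1$ with $\|y_1-z_1\|<\delta_1/4$. Applying Corollary~\ref{cor:ContinuityAtRotundSets} at $K_0$ and $x_0$, I would pick $\delta_0\in(0,\delta_1/8)$ so that, for every $v\in B(x_0,\delta_0)$ and every compact convex $K$ with $h(K,K_0)<\delta_0$, both $\diam\tilde P_K v\le 1/n$ and $\tilde P_K v\subset B(y_1,\delta_1/8)\subset B(z_1,\delta_1/2)$. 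Finally I would set $r:=\min\{\delta_0/2,\,c/2\}$ and $\delta:=\min\{\delta_0/2,\,\delta_1/2\}$.

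The rest is a bookkeeping chase through the constants. For $v_0\in B(x_0,r)$ and $H\in\mathcal M$ with $d_\infty(H,G)<\delta$, the bound $r<c/2$ forces $v_0\notin\bigcup_{m=1}^{n}B(y_m,c/2)$ and hence $G(v_0)=\tilde F(v_0)$; together with $\lip\tilde F<1$ this gives $h(H(v_0),K_0)<\delta+r<\delta_0$, so the choice of $\delta_0$ yields the diameter bound at $k=0$ and any $v_1\in\tilde P_{H(v_0)}v_0$ lies in $B(z_1,\delta_1/2)$. Inductively, if $v_k\in B(z_k,\delta_k/2)$ for some $1\le k\le n-1$, then
\[
h(H(v_k),G(z_k))\le\delta+\lip G\cdot\|v_k-z_k\|<\delta_k/2+\delta_k/2=\delta_k,
\]
so condition~(4) of Lemma~\ref{lem:IndStep} at level $m=k$ delivers $\diam\tilde P_{H(v_k)}v_k\le 1/n$ and $v_{k+1}\in B(z_{k+1},\delta_{k+1}/2)$. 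The only delicate point is that the radii shrink as $k$ decreases (the proof of Lemma~\ref{lem:IndStep} gives $\delta_{k-1}<\delta_k/2$), so $\delta_1$ is the smallest of the $\delta_m$, and picking $\delta<\delta_1/2$ makes the propagation work uniformly for all $1\le k\le n-1$. The substantive work is already inside Lemma~\ref{lem:IndStep}; here one only has to thread the quantitative choices together and splice in the initial step through the rotund set $K_0$.
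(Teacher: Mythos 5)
Your proposal is correct and follows essentially the same route as the paper: run constructions I and II to obtain $G:=G_1$ together with the points $z_m$ and radii $\delta_m$, add one application of Corollary~\ref{cor:ContinuityAtRotundSets} at the rotund set $K_0=G(x_0)$ to handle the initial step, and then propagate the estimates $h(H(v_k),G(z_k))<\delta_k$ forward by induction exactly as in the paper. The only (trivial) adjustment needed is to also cap $\delta$ by $\varepsilon/4$ so that $B_{\mathcal{M}}(G,\delta)\subset B_{\mathcal{M}}(F,\varepsilon)$ in the final ``in particular'' claim, and to note that the case $k=n$ of the induction still yields the diameter bound even though the location information for $v_{n+1}$ is vacuous.
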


\begin{proof}
  Using the construction established with I and II, we obtain a mapping $G_1$ with
  \[
    d_\infty(F,G_1) \leq d_\infty(F, \tilde{F}) + d_\infty(\tilde{F}, G_1) < \frac{\varepsilon}{2} + \frac{\varepsilon}{4} = \frac{3\varepsilon}{4} < \varepsilon,
  \]
  numbers $0<\delta_1<\ldots<\delta_n$ and a sequence of points $z_1,\ldots, z_n$ with $\|y_k-z_k\|< \frac{\delta_k}{4}$ and for every $1\leq m\leq n$ and every compact convex set~$K$ with $h(K,G_{1}(z_m))<\delta_{m}$ and every $x\in C\cap B(z_m,\delta_{m})$ we have $\diam \tilde{P}_{K} x \leq \frac{1}{n}$ and $\tilde{P}_{K} x \subset B(z_{m+1},\delta_{m+1}/2)$. We set $z_0:=x_0$ and $G:=G_1$.
  Note that $G$ satisfies condition \textit{1.}. The rest of the proof is dedicated to showing that \textit{2.} holds.

  By Corollary~\ref{cor:ContinuityAtRotundSets} there is a $\delta_0\in (0,\delta_1/2)$ such that $\tilde{P}_K x \subset C\cap B(z_1,\delta_1/2)$ for every $x\in B(z_0,\delta_0)$ and every compact convex set $K$ with $h(K,G(z_1))<\delta_0$. We set $r:=\delta_0/2$ and $\delta:=\min\{r,\frac{\varepsilon}{4}\}$.
  
  Let $H\in B_\mathcal{M}(G,\delta)$ and $v_0\in B(z_0,r)$. Since $H$ is nonexpansive, we have
  \[
    h(H(v_0),G(z_0)) \leq h(H(z_0),G(z_0)) + \|v_0-z_0\| < \delta_0
  \]
  which by the choice of these constants implies that $\tilde{P}_{H(v_0)} v_0 \subset C\cap B(z_1,\delta_1/2)$ and $\diam\tilde{P}_{H(v_0)} v_0\leq 1/n$. We continue by induction and use the properties established in Lemma~\ref{lem:IndStep}. Assume we have
  \[
    v_k\in \tilde{P}_{H(v_{k-1})} v_{k-1} \subset C\cap B(z_k,\delta_k/2)
  \]
  then we get
  \[
    h(H(v_k),G(z_k)) \leq h(H(z_k),G(z_k)) + \|v_k-z_k\| \leq \delta_k
  \]
  and hence $\diam\tilde{P}_{H(v_k)} v_k\leq 1/n$ and $\tilde{P}_{H(v_{k+1})} v_{k+1} \subset C\cap B(z_{k+1},\delta_{k+1}/2)$. As long as $k+1\leq n$. Even though, in the last step, we only obtain that $\diam\tilde{P}_{H(v_n)} v_n \leq 1/n$ but no additional information on the location of this set, it is enough to show that $B_{\mathcal{M}}(G,\delta) \subset \mathcal{A}_{n}(x_0) \cap B_\mathcal{M}(F,\varepsilon)$ and hence that $\mathcal{M}\setminus \mathcal{A}_n(x_0)$ is nowhere dense.
\end{proof}

We have now completed the preliminary work, and can move on to proving the main theorem.
\begin{proof}[Proof of Theorem~\ref{thm:MainResult1}]
  We set
  \[
    \mathcal{A}(x_0) = \bigcap_{n=1}^{\infty} \mathcal{A}_n(x_0)
  \]
  and observe that it is the complement of a meagre set. Moreover for $F\in \mathcal{A}(x_0)$ and a sequence $(x_k)_{k\in\mathbb{N}}$ starting with $x_0$ and satisfying $x_{k+1}\in \tilde{P}_{F(x_k)} x_k$ we have $\diam \tilde{P}_{F(x_k)} x_k \leq 1/n$ for every $n\in\mathbb{N}$ with $n\geq k$ and hence $\tilde{P}_{F(x_k)} x_k$ is a singleton. Convergence to a fixed point follows from Theorems 4.2, 4.3 and 4.4 in~\cite{RZ2002}.		
\end{proof}

Lastly, we formalise the following consequence of the main theorem and its proof.

\begin{corollary}
  Let $X$ be a separable Banach space. Then there is a residual set $\mathcal{F}\subset \mathcal{M}$ such that for every $F\in \mathcal{F}$ there is a residual subset $\mathcal{U}\subset C$ with the property that for every $x_0\in \mathcal{U}$ the mapping $F$ admits a regular trajectory starting at $x_0$.
\end{corollary}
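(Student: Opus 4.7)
The plan is to apply Theorem~\ref{thm:MainResult1} along a countable dense subset of~$C$ and then swap quantifiers in a Kuratowski-Ulam-type fashion. Since $X$ is separable, so is the closed subset $C$; I will fix a countable dense sequence $(q_j)_{j\in\mathbb{N}}$ in $C$ and set
\[
  \mathcal{F} := \bigcap_{j=1}^{\infty} \mathcal{A}(q_j).
\]
As a countable intersection of complements of meagre sets (one for each $q_j$, by Theorem~\ref{thm:MainResult1}), $\mathcal{F}$ is residual in $\mathcal{M}$.

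Fix $F\in\mathcal{F}$. For each $n\in\mathbb{N}$ I consider
\[
  U_n^F := \{x\in C\colon F\in\mathcal{A}_n(x)\}.
\]
The key observation is that $U_n^F$ is open in $C$: reading the definition of $\mathcal{A}_n(\cdot)$ literally, if some radius $r>0$ witnesses $F\in\mathcal{A}_n(x)$, then for any $y\in C\cap B(x,r/2)$ one has $B(y,r/2)\subset B(x,r)$, so $r/2$ witnesses $F\in\mathcal{A}_n(y)$. Moreover, $U_n^F$ contains every $q_j$ by the choice of $\mathcal{F}$, hence is dense in $C$ by density of the $(q_j)$. Since $C$ is completely metrisable, it is a Baire space, so
\[
  \mathcal{U} := \bigcap_{n=1}^{\infty} U_n^F
\]
is a residual subset of~$C$. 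For $x_0\in\mathcal{U}$ one has $F\in\bigcap_n\mathcal{A}_n(x_0)=\mathcal{A}(x_0)$, so Theorem~\ref{thm:MainResult1} produces a unique infinite sequence of successive approximations starting at $x_0$, which is precisely a regular trajectory.

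The only real content is the openness of $U_n^F$ in $C$, and this drops out of the fact that $\mathcal{A}_n(x_0)$ is defined via an \emph{existential} statement about a ball around $x_0$, so shrinking the witnessing radius provides all the slack needed to move the basepoint. Consequently I expect no serious obstacle: everything beyond this observation is a standard residuality argument based on separability of $C$ and Theorem~\ref{thm:MainResult1}.
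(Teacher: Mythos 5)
Your proposal is correct and follows essentially the same route as the paper: both take $\mathcal{F}$ to be the intersection of the sets $\mathcal{A}_n(y)$ over a countable dense set of basepoints $y$ and all $n$, and both obtain $\mathcal{U}$ as a countable intersection of dense open sets of initial points, using that the defining condition of $\mathcal{A}_n(x_0)$ is stable under shrinking the witnessing ball around the basepoint. The only cosmetic difference is that the paper describes the dense open sets as unions of balls $B(y,r_{n,y})$, whereas you describe them as $\{x\in C\colon F\in\mathcal{A}_n(x)\}$; these coincide in substance.
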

\begin{proof}
  Let $D$ be a countable and dense subset of $C$. For every $n\in \mathbb N$ define the sets
  \[
    \mathcal{A}_n := \bigcap_{y\in D} \mathcal A_n(y) \quad \text{ and }\quad \mathcal{F}:= \bigcap_{n=0}^\infty \mathcal A_n.
  \]
  Observe that since $D$ is countable the set $\mathcal A_n$ is residual and hence also the set $\mathcal{F}$ is residual in $\mathcal{M}$. For $F\in \mathcal{F}$ and $n\in \mathbb N$ we have in particular $F\in \mathcal A_n(y)$ for every $y\in D$. Hence there is an $r_{n,y}>0$ such that $\diam \tilde{P}_{F(v_k)}v_k \leq \frac{1}{n}$ for all $v_0,\ldots,v_n$ with $v_0\in B(y,r_{n,y})$ and $v_{k+1}\in\tilde{P}_{F(v_k)} v_k$. Then for every $n\in \mathbb N$ the sets
  \[
    \mathcal{U}_n:= \left\{ x\in C \colon \|x-y\| < r_{n,y} \text{ for all }y\in D  \right\}
  \]
  are dense in $C$, since $D$ itself is dense in $C$. Hence
  \[
    \mathcal{U} := \bigcap_{n=1}^\infty \mathcal U_n
  \]
  is residual in $C$ and every $x_0\in \mathcal U$ has a regular trajectory with respect to $F$.
\end{proof}
\medskip\medskip{}

\noindent\textbf{Acknowledgement.} This research has been supported by the Austrian Science Fund (FWF): P~32523-N. This version of the article has been accepted for publication, after peer review 
but is not the Version of Record and does not reflect post-acceptance improvements, or any
corrections. The Version of Record is available online at:  \burl{https://doi.org/10.1007/s00605-022-01813-y}.

\vspace{6mm}
\noindent
Christian Bargetz\\
Department of Mathematics\\
University of Innsbruck\\
Technikerstraße 13,
6020 Innsbruck,
Austria\\
\texttt{christian.bargetz@uibk.ac.at}\\[3mm]
\noindent
Emir Medjic\\
Department of Mathematics\\
University of Innsbruck\\
Technikerstraße 13,
6020 Innsbruck,
Austria\\
\texttt{emir.medjic@uibk.ac.at}\\[3mm]
\noindent
Katriin Pirk\\
Department of Mathematics\\
University of Innsbruck\\
Technikerstraße 13,
6020 Innsbruck,
Austria\\
\texttt{katriin.pirk@uibk.ac.at}\\[3mm]

\end{document}